\documentclass[11pt]{article}
\usepackage{amsmath,amsfonts,amssymb,amsthm}
\usepackage{mathrsfs,amscd,comment,scalefnt}
\usepackage{makeidx}  
\usepackage{graphicx}
\usepackage[matrix,arrow,curve]{xy}
\usepackage{mathabx}

\usepackage{upgreek,relsize} 

\usepackage[T2A]{fontenc}
\usepackage[utf8]{inputenc}
\usepackage[english]{babel}

\usepackage{bm}
\usepackage{mathrsfs}

\usepackage{xcolor}

\newcommand\ScaleExists[1]{\vcenter{\hbox{\scalefont{#1}$\exists$}}}

\newcommand\ScaleForall[1]{\vcenter{\hbox{\scalefont{#1}$\forall$}}}

\DeclareMathOperator*\bigexists{%
  \vphantom\sum
  \mathchoice{\ScaleExists{1.8}}{\ScaleExists{1.4}}{\ScaleExists{1.0}}{\ScaleExists{0.8}}}

\DeclareMathOperator*\bigforall{%
  \vphantom\sum
  \mathchoice{\ScaleForall{1.8}}{\ScaleForall{1.4}}{\ScaleForall{1.0}}{\ScaleForall{0.8}}}

\makeatletter
\def\blfootnote{\xdef\@thefnmark{}\@footnotetext}
\makeatother

\usepackage{hyperref}

\newcommand\hide[1]{\commented{gray}{Hidden:}{#1}}
\renewcommand\hide[1]\empty

\definecolor{teal}{rgb}{0.0, 0.5, 0.5}

\newcommand\DS[1]{{\color{blue}#1}}

\theoremstyle{definition}

\begin{document}

\theoremstyle{plain}
\newtheorem{tm}{Theorem}
\newtheorem{lm}{Lemma}
\newtheorem{prp}{Proposition}
\newtheorem{cor}{Corollary}

\theoremstyle{definition}
\newtheorem{df}{Definition}
\newtheorem{exm}{Example}
\newtheorem*{mdl}{Model} 
\newtheorem{prb}{Problem}

\theoremstyle{remark}
\newtheorem*{pf}{Proof}
\newtheorem{rmk}{Remark}
\newtheorem*{hrmk}{Historical remarks}
\newtheorem*{wrn}{Warning}
\newtheorem{q}{Question}
\newtheorem*{qs}{Questions}
\newtheorem{ex}{Example}
\newtheorem*{exs}{Examples}
\newtheorem*{conj}{Conjecture}

\newcommand{\dom}{ {\mathop{\mathrm {dom\,}}\nolimits} }
\newcommand{\ran}{ {\mathop{\mathrm{ran\,}}\nolimits} }

\newcommand{\id}{ {\mathop{\mathrm {id}}\nolimits} }
\newcommand{\nat}{ {\mathop{\mathrm {nat}}\nolimits} }

\newcommand{\cf}{ {\mathop{\mathrm {cf\,}}\nolimits} }
\newcommand{\cl}{ {\mathop{\mathrm {cl\,}}\nolimits} }
\newcommand{\cof}{ {\mathop{\mathrm {cof\,}}\nolimits} }
\newcommand{\add}{ {\mathop{\mathrm {add\,}}\nolimits} }
\newcommand{\sat}{ {\mathop{\mathrm {sat\,}}\nolimits} }
\newcommand{\tc}{ {\mathop{\mathrm {tc\,}}\nolimits} }
\newcommand{\unif}{ {\mathop{\mathrm {unif\,}}\nolimits} }
\newcommand{\fp}{ {\mathop{\mathrm {fp\,}}\nolimits} }
\newcommand{\fs}{ {\mathop{\mathrm {fs\,}}\nolimits} }
\newcommand{\pr}{ {\mathop{\mathrm {pr\/}}\nolimits} }
\newcommand{\uhr}{\!\upharpoonright\!}
\newcommand{\lra}{ {\:\leftrightarrow\:} }
\newcommand{\ot}{ {\mathop{\mathrm {ot\,}}\nolimits} }
\newcommand{\ol}{\overline}
\newcommand{\cnc}{ {^\frown} }
\newcommand{\image}{\/``\,}

\newcommand{\wt}{\widetilde}
\newcommand{\fnc}{\mathrm{fnc}}

\newcommand{\isom}{\simeq}

\newcommand{\gen}{\textrm{-}\rm{gen}}

\renewcommand{\mathcal}{\mathscr}

\renewcommand{\bm}{\boldsymbol}

\newcommand{\scc}{\bm\upbeta}

\hide{
\renewcommand{\vartheta}{\bm\vartheta}
}

\newcommand{\J}{ {\mathrm J} }
\newcommand{\PC}{ {\mathrm {PC}} }
\newcommand{\PRA}{ {\mathrm {PRA}} }
\newcommand{\T}{ {\mathrm T} }
\newcommand{\TA}{ {\mathrm {TA}} }
\newcommand{\PA}{ {\mathrm {PA}} }
\newcommand{\KP}{ {\mathrm {KP}} }
\newcommand{\Z}{ {\mathrm Z} }
\newcommand{\ZF}{ {\mathrm {ZF}} }
\newcommand{\ZFA}{ {\mathrm {ZFA}} }
\newcommand{\ZFC}{ {\mathrm {ZFC}} }
\newcommand{\AEx}{ {\mathrm {AE}} }
\newcommand{\AR}{ {\mathrm {AR}} }
\newcommand{\WR}{ {\mathrm {WR}} }
\newcommand{\AF}{ {\mathrm {AF}} }
\newcommand{\AC}{ {\mathrm {AC}} }
\newcommand{\GC}{ {\mathrm {GC}} }
\newcommand{\DC}{ {\mathrm {DC}} }
\newcommand{\AD}{ {\mathrm {AD}} }
\newcommand{\AInf}{ {\mathrm {AInf}} }
\newcommand{\AU}{ {\mathrm {AU}} }
\newcommand{\AP}{ {\mathrm {AP}} }
\newcommand{\PI}{ {\mathrm {PI}} }
\newcommand{\CH}{ {\mathrm {CH}} }
\newcommand{\GCH}{ {\mathrm {GCH}} }
\newcommand{\APr}{ {\mathrm {APr}} }
\newcommand{\ASp}{ {\mathrm {ASp}} }
\newcommand{\ARp}{ {\mathrm {ARp}} }
\newcommand{\AFA}{ {\mathrm {AFA}} }
\newcommand{\BAFA}{ {\mathrm {BAFA}} }
\newcommand{\FAFA}{ {\mathrm {FAFA}} }
\newcommand{\SAFA}{ {\mathrm {SAFA}} }
\newcommand{\nonempty}{ {\mathrm {nonempty}} }
\newcommand{\I}{ {\mathrm {I}} }
\newcommand{\II}{ {\mathrm {II}} }
\newcommand{\lex}{ {\mathrm {lex}} }

\newcommand{\fin}{\mathrm{fin}}

\author{Nikolai L.~Poliakov, Denis I.~Saveliev}
\title{More on expressibility of satisfiability 
in submodels and extensions}

\date{%30.04.2026
\blfootnote{
{\it MSC~2020}:
Primary 
03B45 % Modal logic
03C75 % Other infinitary logic 
Secondary 
% 03B10 % Higher-order logic 
03C52 % Properties of classes of models 
% 03C80 % Logic with extra quantifiers & operations 
03C85 % Second- & higher-order model theory 
03E10 % Ordinal & cardinal numbers 
03E55 % Large cardinals 
}
 \blfootnote{
{\it Keywords}: 
submodel modality, 
extension modality, 
infinitary language, 
monadic language, 
large cardinal
}
%\thanks{The research is supported by the MSHE project}
}

\maketitle
%\newpage 

\begin{abstract}
We study expressibility in infinitary languages 
of the modal operators associated with satisfiability 
of sentences of these languages in submodels and 
extensions of models. We give a syntactic criterion 
for expressibility in finitary predicate languages, 
show that in many cases infinitary languages are 
closed under the operator associated with submodels, 
and that this is so in any language with 
a~purely monadic signature. Finally, we prove that 
in finitary or strongly compact languages, 
the operator associated with extensions, though 
can be inexpressible by a single sentence, 
is always expressible by a universal theory, 
in striking contrast with the submodel case. 
\end{abstract}

%\newpage 

\section*{Introduction}

\hide{
\DS{ perhaps better notation: 
$\bm\vartheta(\varphi)$ and $\bm\vartheta^*(\varphi)$ 
or else
$\bm\upvartheta(\varphi)$ and $\bm\upvartheta^*(\varphi)$ }
}

\vspace{0.5em}

For a~class~$K$ of models of a~signature~$\tau$, 
a~binary relation~$R$ on~$K$, and a~sentence~$\varphi$ 
of a~model-theoretic language~$\mathcal L$ 
(in sense of~\cite{Barwise Feferman}), let 
$\vartheta_{K,R}(\varphi)$ express the fact that 
for any model $\mathfrak A$ in~$K$ there exists 
a~model $\mathfrak B$ in~$K$ such that 
$\mathfrak A\,R\,\mathfrak B$ and 
$\mathfrak B\vDash\varphi$. Such $\vartheta_{K,R}$, 
treated as modal operators, lead to modal logics, 
which were, in the general form, introduced and 
studied in~\cite{Saveliev Shapirovsky 2020} 
(for previously known special cases, see 
\cite{Saveliev Shapirovsky 2020}, Introduction). 
One of problems arising in the study of these logics 
is to determine languages~$\mathcal L'$ to which 
the resulting $\vartheta_{K,R}(\varphi)$ belongs 
for a~given $\varphi$ in~$\mathcal L$, in particular, 
to determine when it belongs to $\mathcal L$ itself.

In the case when $R$~is the submodel relation, 
i.e., when $\mathfrak A\,R\,\mathfrak B$ means 
that $\mathfrak B$ is a~submodel of~$\mathfrak A$, 
$K$~is the class of all models of a~given 
signature~$\tau$, and $\mathcal L$~is an infinitary 
language $\mathcal L_{\kappa,\lambda}(\tau)$ 
or its second-order counterpart 
$\mathcal L^{2}_{\kappa,\lambda}(\tau)$, this 
problem was investigated in~\cite{Saveliev 2019}
(for resulting modal logics, see 
\cite{Saveliev Shapirovsky 2020}, Section~3,  
for more completeness results Section~4, and 
\cite{Saveliev Shapirovsky 2022}, Section~6). 
Following the notation of~\cite{Saveliev 2019}, 
in this case we write, instead $\vartheta_{K,R}$, 
just~$\vartheta$. Thus for any model~$\mathfrak A$ 
of~$\tau$,
$$
\mathfrak A\vDash\vartheta(\varphi)
\;\text{ iff }\;
\text{there exists a~submodel~$\mathfrak B$ of 
$\mathfrak A$ such that }
\mathfrak B\vDash\varphi.
$$ 
Recall that $\mathcal L_{\omega,\omega}$ is 
the usual first-order finitary language; 
$\mathcal L_{\kappa,\lambda}$ expands 
it by involving Boolean connectives of 
any arities~$<\kappa$ and quantifiers of 
any arities~$<\lambda$, where
$\omega\le\lambda\le\kappa$ and $\kappa$ is 
assumed to be regular (if $\kappa$ is singular, 
$\mathcal L_{\kappa,\lambda}$ is semantically 
equivalent to $\mathcal L_{\kappa^+,\lambda}$, 
see \cite{Dickmann 1975}, Example~1.2.22); 
and $\mathcal L_{\infty,\lambda}$~is the union 
of $\mathcal L_{\kappa,\lambda}$ for all~$\kappa$;
see \cite{Barwise Feferman,
Dickmann 1975,Drake,Kanamori}.

The following facts were stated 
in~\cite{Saveliev 2019}. 
For all $\lambda$, $\kappa$, and $\varphi$ 
in $\mathcal L_{\kappa,\lambda}(\tau)$, 
$\vartheta(\varphi)$ is expressible by 
a~sentence in the second-order language 
$\mathcal L^{2}_{\kappa,\lambda}(\tau)$ iff 
the signature~$\tau$ contains~$<\kappa$ function 
(including constant) symbols 
(\cite{Saveliev 2019}, Theorems 5 and~7). 
Certain first-order languages 
$\mathcal L_{\kappa,\lambda}(\tau)$ are closed 
under~$\vartheta$, in the sense that, 
whenever $\varphi$ is in the language then so 
is $\vartheta(\varphi)$; in particular, this is 
the case if $\kappa$ is strongly inaccessible
(\cite{Saveliev 2019}, Remark~9). However, 
$\mathcal L_{\omega,\omega}(\tau)$ does not 
share this feature; moreover, for $\tau$ with 
a~binary predicate symbol, some $\varphi$ have 
$\vartheta(\varphi)$ inexpressible even in 
$\mathcal L_{\infty,\omega}(\tau)$ 
(\cite{Saveliev 2019}, Theorem~4). 
The following semantic criterion 
specifying sentences $\varphi$ such that 
$\vartheta(\varphi)$ is expressible in 
$\mathcal L_{\omega,\omega}(\tau)$ was proved
(\cite{Saveliev 2019}, Corollary~11):
If $\tau$ is a~signature without function symbols, 
then the following are equivalent:
\begin{quote}
\begin{itemize}
\setlength\itemsep{-0.2em}
\item[(i)]
$\vartheta(\varphi)$ is equivalent to 
a~sentence in $\mathcal L_{\omega,\omega}(\tau)$;
\item[(ii)]
any model satisfying $\varphi$ has a~finite 
submodel satisfying~$\varphi$; 
\item[(iii)]
there exists $n<\omega$ such that 
any model satisfying~$\varphi$ has a~submodel 
of cardinality~$\le n$ satisfying~$\varphi$. 
\end{itemize}
\end{quote}

%\newpage 

This paper continues these studies.

In the first section, we consider the 
finitary language $\mathcal L_{\omega,\omega}$. 
Supplementing the above semantic criterion, 
we provide a~syntactic criterion 
for those sentences $\varphi$ in 
$\mathcal L_{\omega,\omega}(\tau)$ that have 
$\vartheta(\varphi)$ expressible in the same 
$\mathcal L_{\omega,\omega}(\tau)$
(Theorem~\ref{t: syntactic criterion}). 
It says $\varphi$~has this property iff
it is equivalent to a~$\Sigma^{0}_2$-sentence 
in a~certain universal theory~$T_\varphi$. 
We also show that $\vartheta(\varphi)$ can be 
expressible by a~theory but not by a~single 
sentence of the same language, and this 
never happens with $\neg\,\vartheta(\varphi)$
(Propositions \ref{p: theta expressed by thy} 
and~\ref{p: neg-theta inexpressed by thy}).

In the next section, we consider languages 
$\mathcal L_{\kappa,\lambda}$ with $\kappa>\omega$.
First, we write out infinitary formulas expressing 
statements like $\vartheta_{\le\nu}(\varphi)$, 
variants of $\vartheta(\varphi)$ with bounded 
cardinality of submodels, and provide sufficient 
conditions for $\mathcal L_{\kappa,\lambda}(\tau)$
to be closed under~$\vartheta_{\le\nu}$ 
(Lemma~\ref{l: relativization} and 
Corollary~\ref{c: expressibility of theta_nu}).
Then we use infinitary versions of the downward 
L{\"o}wenheim--Skolem theorems to isolate cases 
when $\vartheta_{\le\nu}$ coincides 
with~$\vartheta$ (Theorem~\ref{t: dLS vs theta} 
and Corollary~\ref{c: dLS vs theta}).
Finally, we combine both observations: given 
$\kappa,\lambda$, and~$\tau$, we calculate 
$\kappa',\lambda'$ such that, for any $\varphi$ 
in $\mathcal L_{\kappa,\lambda}(\tau)$, 
$\vartheta(\varphi)$ is expressible 
in $\mathcal L_{\kappa',\lambda'}(\tau)$; 
thence we obtain conditions sufficient for 
$\mathcal L_{\kappa,\lambda}(\tau)$ to be 
closed under~$\vartheta$
(Theorem~\ref{t: expressing theta}). We note 
that (due to Remark~\ref{r: properly many kappa})
for any given $\lambda$ and $\tau$ there is 
a~proper class of~$\kappa$ such that 
$\mathcal L_{\kappa,\lambda}(\tau)$ 
is closed under~$\vartheta$.

In the third section, we first prove that, 
for any $\mathcal L_{\kappa,\lambda}(\tau)$, 
the sentences constructed from prenex 
existential-universal sentences by conjunctions 
and disjunctions, whenever are satisfied in 
a~model, then satisfied in submodels of 
cardinality~$<\kappa$, and conclude that 
$\vartheta$ and an its restricted version 
($\vartheta_{<\kappa}$ for $\tau$~without 
function symbols) coincide on these sentences 
(Proposition~\ref{p: EA} and 
Corollary~\ref{c: EA}). 
We observe then that so-called monadic-like 
sentences have this form; moreover, any such 
sentence of $\mathcal L_{\kappa,\lambda}(\tau)$ 
is equivalent to a~Boolean combination of 
existential (or universal) one-variable 
sentences of $\mathcal L_{\kappa,\omega}(\tau)$ 
(Theorem~\ref{t: monadic-like}). Applying 
these observations to purely monadic (i.e., 
consisting only of unary predicate symbols) 
signatures~$\tau$ without equality, we prove 
that in this case 
$\mathcal L_{\kappa,\lambda}(\tau)$ 
is semantically equivalent to 
$\mathcal L_{\kappa,\omega}(\tau)$ 
and is closed under~$\vartheta$ 
(Theorem~\ref{t: monadic signature}).

In the final section, we consider 
$\vartheta_{K,R}$ in the converse case when 
$R$~is the extension relation, i.e., when 
$\mathfrak A\,R\,\mathfrak B$ means that 
$\mathfrak A$ is a~submodel of~$\mathfrak B$ 
and $K$~is the class of models of 
a~given~$\tau$. To keep the notation short, 
we shall denote it just by~$\vartheta^*$. 
Thus for any model~$\mathfrak A$ of~$\tau$, 
$$
\mathfrak A\vDash\vartheta^*(\varphi)
\;\text{ iff }\;
\text{there is a~model }
\mathfrak B\vDash\varphi 
\text{ such that $\mathfrak A$ is 
a~submodel of $\mathfrak B$.} 
$$
We observe first that 
if $\vartheta^*(\varphi)$ is expressible 
in $\mathcal L_{\omega,\omega}(\tau)$, or 
in $\mathcal L_{\kappa,\lambda}(\tau)$ 
with a~strongly compact~$\kappa$, then 
it is universal 
(Proposition~\ref{p: theta* universal}), 
and that there are sentences $\varphi$ of 
$\mathcal L_{\omega,\omega}(\tau)$ for which 
$\vartheta^*(\varphi)$ is inexpressible by 
a~single sentence of the same language 
% $\mathcal L_{\omega,\omega}(\tau)$ 
(Theorem~\ref{t: inexpressible theta*}).
On the other hand, we state 
the following expressibility result
(Theorem~\ref{t: expressing theta*}). 
For any $\tau$ and $\varphi$ in 
$\mathcal L_{\omega,\omega}(\tau)$, 
or in $\mathcal L_{\kappa,\lambda}(\tau)$ 
with a~strongly compact~$\kappa$, 
$\vartheta^*(\varphi)$ is expressed by 
a~universal theory in the same language, 
and so, by a~single universal sentence in 
$\mathcal L_{\infty,\omega}(\tau)$, 
respectively, 
$\mathcal L_{\infty,\lambda}(\tau)$\,---\,%  
in striking contrast to the case of submodels 
where $\vartheta(\varphi)$ can be inexpressible 
in $\mathcal L_{\infty,\omega}(\tau)$. 
So $\vartheta^*(\varphi)$ is expressible 
by a~single sentence iff $\varphi$ has 
a~strongest universal consequence. Moreover, 
this remains true for the second-order 
languages $\mathcal L^{2}_{\kappa,\lambda}$ 
with an extendible~$\kappa$ 
(Remark~\ref{r: extendible}).

Most of the notation of this paper is 
common in set theory and model theory.  
Given a~signature~$\tau$, we denote by 
$\fnc(\tau)$ its subsignature consisting of 
all function (including constant) symbols. 
We let, for any model~$\mathfrak A$,
\begin{align*}
\mathfrak A\vDash\vartheta_{\kappa}(\varphi)
\;\text{ iff }\;
\text{there exists a~submodel $\mathfrak B$ of 
$\mathfrak A$ such that }|B|=\kappa\text{ and }
\mathfrak B\vDash\varphi,
\\
\mathfrak A\vDash\vartheta_{\kappa\gen}(\varphi)
\;\text{ iff }\;
\text{there exists a~$\kappa$-generated submodel 
$\mathfrak B$ of $\mathfrak A$ such that }
\mathfrak B\vDash\varphi.
\end{align*}
Also, let 
$
\vartheta_{<\nu}(\varphi)
\text{ iff }
\bigvee_{\!\mu<\nu}\vartheta_{\mu}(\varphi), 
$
$
\vartheta_{\le\nu}(\varphi)
\text{ iff }
\bigvee_{\!\mu\le\nu}\vartheta_{\mu}(\varphi), 
$
and likewise for $\vartheta_{<\nu\gen}(\varphi)$ 
and $\vartheta_{\le\nu\gen}(\varphi)$. 
In the finitary case, the class $\Sigma^{0}_1$ 
consists of existential, and $\Pi^{0}_1$, 
of universal formulas, $\Sigma^{0}_2$~of 
existential-universal ones, 
etc.~(see~\cite{Chang Keisler}). In the case of 
infinitary languages, formulas are generally not 
reduced to the prenex form (see \cite{Dickmann 1975}, 
cf.~Remark~\ref{r: no prenex} below), so we 
indicate their structure explicitly when necessary.

%\newpage

\section*{Finitary case}

In this section, we consider 
$\mathcal L_{\omega,\omega}$, the usual 
first-order finitary language. The theorem 
below characterizes sentences~$\varphi$ of 
$\mathcal L_{\omega,\omega}(\tau)$
that have $\vartheta(\varphi)$ expressible 
in the same $\mathcal L_{\omega,\omega}(\tau)$, 
assuming that the signature~$\tau$ has no 
function symbols. Note first that, for any 
$\varphi$ in such a~$\tau$, and any $n<\omega$, 
$\neg\,\vartheta_{\le n}(\varphi)$ is 
equivalent to a~$\Pi^{0}_1$-sentence of 
$\mathcal L_{\omega,\omega}(\tau)$, and therefore,  
$$
T_\varphi:=
\{\neg\,\vartheta_{\le n}(\varphi):n<\omega\}
$$
is a~$\Pi^{0}_1$-theory. Clearly, it expresses that
neither finite model of $\tau$ satisfies~$\varphi$; 
we just use the shorter notation $T_\varphi$ 
instead of $\neg\,\vartheta_{<\omega}(\varphi)$.

\begin{tm}\label{t: syntactic criterion}
Let $|\fnc(\tau)|=0$ and $\varphi$ a~sentence 
in $\mathcal L_{\omega,\omega}(\tau)$. 
The following are equivalent:
\begin{enumerate}
\setlength\itemsep{-0.2em}
\item[(i)] 
$\vartheta(\varphi)$~is expressible by a~sentence 
in $\mathcal L_{\omega,\omega}(\tau)$;
\item[(ii)] 
$T_\varphi\vDash\varphi\lra\psi$ 
for some $\Sigma^{0}_2$-sentence~$\psi$
in $\mathcal L_{\omega,\omega}(\tau)$.
\end{enumerate}
\end{tm}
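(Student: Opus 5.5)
We use the semantic criterion from \cite{Saveliev 2019}, Corollary~11, quoted in the Introduction: since $|\fnc(\tau)|=0$, (i) holds iff every model of $\varphi$ has a finite submodel satisfying $\varphi$, iff there is $n<\omega$ such that every model of $\varphi$ has a submodel of size $\le n$ satisfying $\varphi$. Note that $T_\varphi$ says exactly that no finite model satisfies $\varphi$. Since $\tau$ has no function symbols, every nonempty subset of a model is the universe of a submodel.

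\emph{(i)$\Rightarrow$(ii).} By the criterion, every model of $\varphi$ has a finite submodel satisfying $\varphi$, so every model of $\varphi$ has a finite model of $\varphi$ among its submodels. Every model of $T_\varphi$ has no finite submodel satisfying $\varphi$, so by the criterion it has no model of $\varphi$ at all. Hence $T_\varphi\vDash\neg\varphi$. Any contradictory $\Sigma^0_2$-sentence $\psi$, say $\exists x\,(x\ne x)$ or $\exists x\,(P(x)\wedge\neg P(x))$ if equality is absent, then gives $T_\varphi\vDash\varphi\lra\psi$.

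\emph{(ii)$\Rightarrow$(i).} Write $\psi=\exists\bar x\,\forall\bar y\,\chi(\bar x,\bar y)$ with $\chi$ quantifier-free and $|\bar x|=k$. Let $\mathfrak A\vDash\varphi$; we find a submodel of $\mathfrak A$ of size $\le N:=\max(k,1)$ satisfying $\varphi$, which suffices by the criterion.

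\emph{Case 1: $\mathfrak A$ has a finite submodel satisfying $\varphi$.} If some submodel of size $\le N$ satisfies $\varphi$, we are done. Otherwise we argue uniformly. Let $\mathfrak B\subseteq\mathfrak A$ be a submodel of minimal finite size satisfying $\varphi$. Then every proper submodel of $\mathfrak B$ fails $\varphi$, but that does not put $\mathfrak B$ into $T_\varphi$, since $\mathfrak B$ itself is a finite model of $\varphi$. So this case needs a different idea.

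Reformulate (ii) as follows: every model of $\varphi$ either satisfies $\psi$ or has a finite submodel satisfying $\varphi$. Symmetrically, every model of $\psi$ either satisfies $\varphi$ or has a finite submodel satisfying $\varphi$. Now take $\mathfrak A\vDash\varphi$.
\begin{itemize}
\item If $\mathfrak A\vDash T_\varphi$, then $\mathfrak A\vDash\psi$ with witnesses $\bar a$. The submodel $\mathfrak B$ on $\{\bar a\}$ satisfies $\psi$, since universal formulas pass down to submodels. Being finite, $\mathfrak B\vDash\neg\vartheta_{\le m}(\varphi)$ for $m\ge|B|$ would force $\mathfrak B\vDash\neg\varphi$. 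Hence $\mathfrak B\not\vDash T_\varphi$, so $\mathfrak B$ has a submodel satisfying $\varphi$, which is then a submodel of $\mathfrak A$ as well. This contradicts $\mathfrak A\vDash T_\varphi$.
\item Therefore no model of $\varphi$ satisfies $T_\varphi$, and every model of $\varphi$ has a finite submodel satisfying $\varphi$.
\end{itemize}
By the criterion, (i) follows.

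\emph{Main obstacle.} The key step is the last argument: $\psi$ has a finite witness submodel, and on finite models $T_\varphi$ forces $\neg\varphi$, hence forces $\neg\psi$ via (ii). We must check this carefully, including the empty-witness case $k=0$, where any one-element submodel is used. The compactness-based bound (iii) is supplied by the cited criterion.
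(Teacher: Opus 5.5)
Your proof is correct, but it takes a different and more elementary route than the paper's. The difference is most visible in (i)$\Rightarrow$(ii).

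\textbf{(i)$\Rightarrow$(ii).} The paper argues contrapositively via the chain characterization of $\Pi^0_2$-sentences modulo a theory. If (ii) fails, $\neg\,\varphi$ is not $\Pi^0_2$ modulo $T_\varphi$. So some union of a chain of models of $T_\varphi\cup\{\neg\,\varphi\}$ satisfies $\varphi$, and also satisfies $T_\varphi$, being a union of models of a universal theory. This contradicts the semantic criterion. You observe instead that (i) and the criterion give $T_\varphi\vDash\neg\,\varphi$ outright, so a contradictory $\psi$ witnesses (ii). The model of $T_\varphi\cup\{\varphi\}$ that the paper builds from chains exists directly as soon as (ii) fails for such a $\psi$.

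\textbf{(ii)$\Rightarrow$(i).} Both proofs rest on the same step: the finite witness submodel of $\psi$ inherits the universal hypothesis, hence satisfies $\varphi$ by (ii). The paper first uses compactness to shrink $T_\varphi$ to one sentence $\neg\,\vartheta_{\le n}(\varphi)$. It then gets the bound $\max(k,n)$ and needs only the easy direction (iii)$\Rightarrow$(i) of the criterion. You keep all of $T_\varphi$, reach an outright contradiction, and conclude that $T_\varphi\cup\{\varphi\}$ is unsatisfiable. Compactness is then left to the cited criterion.

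\textbf{What each route gives.} The paper's version produces an explicit size bound. Yours shows that (ii) holds iff $T_\varphi\vDash\neg\,\varphi$, i.e.\ iff it holds with a contradictory $\psi$. So the $\Sigma^0_2$ shape of $\psi$ serves only to supply finite witness submodels, and the theorem is in effect a restatement of the semantic criterion.

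\textbf{Clean-up.}
The announced goal of a submodel of size $\le\max(k,1)$ is false in general. Take $\varphi:=\exists x\,\exists y\;x\ne y$ and $\psi:=\exists x\;x\ne x$: (ii) holds with $k=1$, yet no one-element submodel satisfies $\varphi$. Any such bound must involve the $n$ coming from compactness, as in the paper. You never use this bound, so delete it, together with the abandoned Case~1.

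In the bullet, state explicitly that $\mathfrak B\vDash T_\varphi$ would give $\mathfrak B\vDash\varphi$ by (ii). At present that appeal to (ii) appears only in your closing paragraph.
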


\begin{proof}
(i)$\Rightarrow$(ii). 
Assume (ii)~fails, so $\neg\,\varphi$ is not 
equivalent to any $\Pi^{0}_2$-sentence under~$T_\varphi$. 
By the standard characterization of $\Pi^{0}_2$-formulas 
(see, e.g., \cite{Chang Keisler}), then there is 
an increasing chain of models of 
$T_\varphi\cup\{\neg\,\varphi\}$ such that 
their union $\mathfrak A$ satisfies~$\varphi$. 
Since $T_\varphi$ is $\Pi^{0}_1$, it is preserved 
under increasing chains, so $\mathfrak A$ satisfies 
$T_\varphi$ as well. Thus we have a~model $\mathfrak A$ 
such that it satisfies $\varphi$ but neither its finite 
submodel does. By \cite{Saveliev 2019}, this implies 
that $\varphi$ is not expressed by a~sentence in 
$\mathcal L_{\omega,\omega}(\tau)$.

(ii)$\Rightarrow$(i). 
Let (ii) hold, so there is 
a~$\Sigma^{0}_2$-sentence $\psi$ such that 
$T_\varphi$ proves $\varphi\lra\psi$. 
Let $\psi$ be of the form
$
\exists x_0\ldots\exists x_{k-1}
\forall y_0\ldots\forall y_{l-1}
\,\psi_0,
$
where $\psi_0$~is open (i.e., quantifier free). 
By the compactness theorem, there is 
a~finite fragment of~$T_\varphi$ which implies 
this equivalence, w.l.o.g.~we can assume that 
the fragment is just 
$\neg\,\vartheta_{\le n}(\varphi)$ 
for some $n<\omega$. Let us show now that 
every model $\mathfrak A\vDash\varphi$ 
includes a~submodel $\mathfrak B\vDash\varphi$ 
of cardinality~$\le\max(k,n)$.

Indeed, if $\mathfrak A\vDash\vartheta_{\le n}(\varphi)$, 
then, by the definition of~$\vartheta_{\le n}$,  
$\mathfrak A$ includes a~submodel 
$\mathfrak B\vDash\varphi$ of cardinality $|B|\le n$. 
And if $\mathfrak A\vDash\neg\,\vartheta_{\le n}(\varphi)$, 
then $\mathfrak A\vDash\varphi\lra\psi$, and so 
$\mathfrak A\vDash\psi$. Then, since $\psi$ 
is~$\Sigma^{0}_2$, $\mathfrak A$ includes a~submodel 
$\mathfrak B\vDash\psi$ of cardinality $|B|\le k$. 
Moreover, since $\neg\,\vartheta_{\le n}(\varphi)$ 
is~$\Pi^{0}_1$, we have 
$\mathfrak B\vDash\neg\,\vartheta_{\le n}(\varphi)$. 
Therefore, $\mathfrak B\vDash\varphi$. 
\end{proof}

\begin{cor}\label{c: sufficient}
Let $|\fnc(\tau)|=0$ and $\varphi$ a~sentence 
in $\mathcal L_{\omega,\omega}(\tau)$. 
Any of the following conditions on $\varphi$ 
is sufficient to $\vartheta(\varphi)$ be expressible
by a~sentence in $\mathcal L_{\omega,\omega}(\tau)$:
\begin{enumerate}
\setlength\itemsep{-0.2em}
\item[(i)] 
$\varphi\in\Sigma^{0}_2$;
\item[(ii)] 
there is $n\in\omega\setminus1$ such that 
$\neg\,\varphi$ has no models of cardinality~$n$.
\end{enumerate}
Moreover,
\begin{enumerate}
\setlength\itemsep{-0.2em}
\item[(iii)] 
%$\vartheta(\varphi)$ is equivalent to~$\varphi$ 
$\vartheta(\varphi)\lra\varphi$ 
iff $\varphi\in\Sigma^{0}_1$;
\item[(iv)] 
%$\vartheta(\varphi)$ is equivalent to~$\top$ 
$\vartheta(\varphi)\lra\top$ 
iff $\neg\,\varphi$ has no finite models.
\end{enumerate}
\end{cor}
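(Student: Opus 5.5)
The plan is to derive all four items from Theorem~\ref{t: syntactic criterion} and from the semantic criterion of~\cite{Saveliev 2019} quoted in the Introduction. Throughout, $\tau$ has no function symbols, so every nonempty subset of a model carries a submodel.

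\emph{Item (i).} Take $\psi:=\varphi$ in Theorem~\ref{t: syntactic criterion}(ii). The equivalence $\varphi\lra\varphi$ holds outright, so (i) of the theorem follows. Alternatively, one can argue directly. If $\varphi=\exists\bar x\,\forall\bar y\,\varphi_0$ with $|\bar x|=k$, then any model of $\varphi$ has a submodel of size $\le k$ generated by witnesses for $\bar x$. This submodel still satisfies $\forall\bar y\,\varphi_0$, because universal formulas pass to submodels. Hence $\vartheta(\varphi)\lra\vartheta_{\le k}(\varphi)$, and the right-hand side is a first-order sentence.

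\emph{Item (ii).} Fix $n\ge1$ such that every model of cardinality $n$ satisfies $\varphi$. Let $\mathfrak A\vDash\varphi$. If $|A|\ge n$, pick any $n$ elements of $A$; the submodel on them has cardinality $n$, so it satisfies $\varphi$. If $|A|<n$, then $\mathfrak A$ itself is a finite submodel satisfying $\varphi$. So every model of $\varphi$ has a submodel of size $\le n$ satisfying $\varphi$. By criterion (iii)$\Rightarrow$(i) of~\cite{Saveliev 2019}, $\vartheta(\varphi)$ is first-order expressible; indeed $\vartheta(\varphi)\lra\vartheta_{\le n}(\varphi)$. One could also reach this through $T_\varphi$: since $\neg\,\vartheta_{\le n}(\varphi)$ forbids models of size $n$, the theory $T_\varphi$ is inconsistent, so any $\psi$ works in Theorem~\ref{t: syntactic criterion}(ii).

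\emph{Item (iii).} For ``if'', an existential sentence is preserved upward to extensions. So if some submodel $\mathfrak B\subseteq\mathfrak A$ satisfies $\varphi$, then $\mathfrak A\vDash\varphi$; the converse direction is trivial, taking $\mathfrak B=\mathfrak A$. For ``only if'', suppose $\vartheta(\varphi)\lra\varphi$. If $\mathfrak A\subseteq\mathfrak C$ and $\mathfrak A\vDash\varphi$, then $\mathfrak C\vDash\vartheta(\varphi)$, hence $\mathfrak C\vDash\varphi$. Thus $\varphi$ is preserved under extensions, and the Łoś--Tarski theorem gives that $\varphi$ is equivalent to a $\Sigma^0_1$-sentence. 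Here ``$\varphi\in\Sigma^0_1$'' must be read up to equivalence.

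\emph{Item (iv).} If $\neg\,\varphi$ has no finite models, every finite model satisfies $\varphi$. Every model, being nonempty, has a one-element submodel, which then satisfies $\varphi$; so $\vartheta(\varphi)$ holds everywhere. Conversely, suppose $\vartheta(\varphi)\lra\top$ and $\mathfrak F$ is finite. Then $\mathfrak F$ has a submodel satisfying $\varphi$. This alone does not make $\mathfrak F\vDash\varphi$, so item (iv) needs care here.

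The main obstacle is this converse of (iv), and it looks problematic. For example, $\varphi=\exists x\,P(x)\to\exists x\exists y\,(x\ne y)$ is satisfied by every one-element model with $\neg P$. Then $\vartheta(\varphi)$ fails only in models where every element satisfies $P$ and whose one-element submodels all falsify $\varphi$. A model with two $P$-elements satisfies $\varphi$ but has $\neg\varphi$ finite models of size $1$. So the ``only if'' of (iv) seems to need an extra hypothesis; the intended reading is probably ``no models of cardinality $1$''. I would first try to check the intended meaning against the paper's conventions, and otherwise prove the ``if'' direction together with that corrected converse.
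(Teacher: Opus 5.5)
Your arguments for (i)--(iii) and for the ``if'' half of (iv) are correct; the paper itself offers only ``Clear''. (In your direct argument for (i), if $\bar x$ is empty you need a one-element submodel rather than the empty set.) You are also right that the ``only if'' half of (iv) is false as literally stated. Your corrected version, ``$\neg\,\varphi$ has no models of cardinality $1$'', is the right one. Since $\tau$ has no function symbols, every model has one-element submodels, and a one-element model is its own only submodel. Hence $\vartheta(\varphi)\lra\top$ iff every one-element model satisfies $\varphi$.

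The gap is that your counterexample does not show this. Take $\varphi=\exists x\,P(x)\to\exists x\exists y\,(x\ne y)$ and the one-element model $\{a\}$ with $P(a)$. It falsifies $\varphi$ and has no other submodel, so $\vartheta(\varphi)$ fails there. Thus $\vartheta(\varphi)\lra\top$ is false for this $\varphi$; both sides of (iv) fail and nothing is refuted. A correct witness must make every one-element model satisfy $\varphi$ while some larger finite model does not. One example is $\varphi:=\forall x\forall y\,(x=y)$. An equality-free one is $\forall x\,P(x)\vee\forall x\,\neg P(x)$. In either case every model has a one-element submodel satisfying $\varphi$, so $\vartheta(\varphi)\lra\top$. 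Yet a two-element model satisfies $\neg\,\varphi$; for the second example, take one $P$-point and one non-$P$-point.

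There is a smaller error in your alternative route for (ii). $T_\varphi$ need not be inconsistent when $n\ge2$, because it only excludes models of size $\ge n$. For $\varphi=\exists x\exists y\,(x\ne y)$ and $n=2$, every one-element model satisfies $T_\varphi$, and ``any $\psi$ works'' fails (try $\psi=\top$). The route can be repaired. Every model of $T_\varphi$ has size $<n$, so it is a finite submodel of itself and must satisfy $\neg\,\varphi$. Hence $\psi:=\bot$ works in Theorem~\ref{t: syntactic criterion}(ii). Your main argument for (ii), via the semantic criterion with $\vartheta(\varphi)\lra\vartheta_{\le n}(\varphi)$, is correct as it stands.
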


\begin{proof}
Clear. 
\end{proof}

Let us now discuss expressibility by theories; 
we do not longer assume signatures to be purely  
predicate. As with expressibility by a~single 
sentence, we can say that $\vartheta(\varphi)$ 
is {\it expressible by a~theory~$T$} iff
for any model~$\mathfrak A$,
$$
\mathfrak A\vDash\vartheta(\varphi) 
\;\text{ iff }\;
\mathfrak A\vDash T. 
$$ 
Two propositions below say that, in modal terms, 
our ``possibility'' operator (i.e.,~$\vartheta$) 
applied to some sentences $\varphi$ of 
$\mathcal L_{\omega,\omega}$ can be expressed 
by a~theory but not by a~sentence of the same 
$\mathcal L_{\omega,\omega}$, while for the 
``necessity'' operator (i.e.,~$\neg\,\vartheta$) 
this situation never happens.

\begin{prp}\label{p: theta expressed by thy}
Let $\tau:=\{<\}$ where $<$~is a~binary predicate 
symbol. There is a~sentence~$\varphi$ of 
$\mathcal L_{\omega,\omega}(\tau)$ such that 
$\vartheta(\varphi)$ is expressible 
in $\mathcal L_{\omega,\omega}(\tau)$ 
by a~theory but not by any single sentence. 
\end{prp}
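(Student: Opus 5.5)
The plan is to choose $\varphi$ so that $\vartheta(\varphi)$ says exactly ``$\mathfrak A$ is infinite, or $\mathfrak A$ contains two distinct elements that are either $<$-comparable in both directions or $<$-incomparable''. A class of this form is axiomatized by countably many sentences. Inexpressibility by a single sentence will then come from the semantic criterion of~\cite{Saveliev 2019} (Corollary~11) recalled in the Introduction.

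\textbf{The sentence.} Let $\varphi:=\varphi_{\max}\vee\varphi_{\min}\vee\chi$, built as follows.
\begin{itemize}
\item $\varphi_{\max}$ says three things: for distinct $x,y$ exactly one of $x<y$, $y<x$ holds; $<$ is transitive on distinct elements; and $\forall x\,\exists y\,(x\ne y\wedge x<y)$. Reflexive pairs are deliberately left unconstrained.
\item $\varphi_{\min}$ is the same with the last clause replaced by $\forall x\,\exists y\,(x\ne y\wedge y<x)$.
\item $\chi:=\exists x\,\exists y\,\bigl(x\ne y\wedge((x<y\wedge y<x)\vee(\neg x<y\wedge\neg y<x))\bigr)$.
\end{itemize}
Neither $\varphi_{\max}$ nor $\varphi_{\min}$ has a finite model. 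Indeed, on a finite set these force a linear order on distinct elements, which has a maximum and a minimum.

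\textbf{Computing $\vartheta(\varphi)$.} We claim that for every $\mathfrak A$,
$$\mathfrak A\vDash\vartheta(\varphi)\ \text{ iff }\ \mathfrak A\vDash\chi\ \text{ or }\ A \text{ is infinite}.$$
\begin{itemize}
\item If $\mathfrak A\vDash\chi$, the two witnessing elements form a submodel satisfying $\chi$, since $\tau$ has no function symbols.
\item Suppose $A$ is infinite and $\mathfrak A\nvDash\chi$. Then every pair of distinct elements is comparable in exactly one direction. Enumerate a countable subset $\{a_i:i<\omega\}$ and colour each pair $i<j$ by whether $a_i<a_j$ or $a_j<a_i$. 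Ramsey's theorem gives an infinite homogeneous subset. On it, $<$ restricted to distinct elements is the order of $\omega$ or of its reverse, so it is transitive and has no maximum, respectively no minimum. That submodel therefore satisfies $\varphi_{\max}$ or $\varphi_{\min}$.
\item Conversely, let $A$ be finite with $\mathfrak A\nvDash\chi$. Every submodel is finite, so it cannot satisfy $\varphi_{\max}$ or $\varphi_{\min}$. It also fails $\chi$, because $\chi$ is existential and so passes up from a submodel to $\mathfrak A$.
\end{itemize}

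\textbf{The theory.} Let $\varepsilon_n$ say ``there are at least $n$ elements''. Then $T:=\{\chi\vee\varepsilon_n:n<\omega\}$ expresses $\vartheta(\varphi)$: a model satisfies all of $T$ iff it satisfies $\chi$ or all $\varepsilon_n$, that is, iff it satisfies $\chi$ or is infinite.

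\textbf{No single sentence.} Since $\tau$ has no function symbols, the criterion from~\cite{Saveliev 2019} applies: it suffices to exhibit a model of $\varphi$ with no finite submodel satisfying $\varphi$. Take $(\omega,<)$. It satisfies $\varphi_{\max}$. Each finite submodel is a finite strict linear order, so it fails $\chi$ and has both a maximum and a minimum, hence fails $\varphi$.

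The main obstacle is the Ramsey step, specifically making sure that the homogeneous pieces are exactly captured by $\varphi_{\max}$ and $\varphi_{\min}$. This is why the reflexive behaviour of $<$ is left unconstrained in both sentences, and why the ``both directions'' and ``incomparable'' colours are absorbed into the finitely witnessed disjunct~$\chi$.
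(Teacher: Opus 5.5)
Your proof is correct and is essentially the paper's argument. The paper takes $\varphi$ to say that if $<$ is a linear order then it has no first or no last element, so that $\vartheta(\varphi)$ means ``not a linear order, or infinite'' and is axiomatized by the sentences $\sigma_n$ (``if $<$ is a linear order, there are $\ge n$ elements''); your $\chi$ plays the role of ``not a linear order'', which is why you need the extra bookkeeping on reflexive pairs and a Ramsey argument for tournaments. The only real difference is the last step: the paper argues via compactness that its theory is not equivalent to any of its finite fragments, whereas you apply the finite-submodel criterion recalled in the Introduction to $(\omega,<)$, and both routes are valid.
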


\begin{proof}
Let $\varphi$ say that, if $<$~is a~linear order, 
then it has either no first or no last element, 
and let $T:=\{\sigma_{n}:n<\omega\}$ where 
the sentence $\sigma_{n}$ says that, if $<$~is 
a~linear order, then it has~$\ge n$ elements. 
Then $\vartheta(\varphi)$ is expressible by the  
theory~$T$ (since, for any $\mathfrak A\vDash T$, 
if $\mathfrak A$ is a~linearly ordered set, 
then it is infinite, and so, it includes a~subset 
that has either no first element or no last element) 
but not by any single sentence (since $T$ is not 
equivalent to any of its finite fragments).
\end{proof}

\begin{rmk}\label{r: theta inexpressed by thy}
A~binary predicate symbol in $\tau$ of  
Proposition~\ref{p: theta expressed by thy} 
cannot be reduced to unary since for purely 
monadic~$\tau$, $\mathcal L_{\omega,\omega}(\tau)$ 
is closed under~$\vartheta$; an infinitary analog 
of this fact will be considered below. 
Note also that $\vartheta(\varphi)$ in 
Proposition~\ref{p: theta expressed by thy} 
is expressible by one sentence of 
$\mathcal L_{\omega_1,\omega}(\tau)$ 
(since expressible by a~countable theory). 
Example~\ref{e: inexpressible theta} below provides 
sentences $\varphi$ for which $\vartheta(\varphi)$ 
express some natural properties inexpressible in 
$\mathcal L_{\omega_1,\omega}(\tau)$, and even in 
$\mathcal L_{\infty,\omega}(\tau)$, and thus  
by any theory in the language of~$\varphi$. 
\end{rmk}

\hide{
By contrast, there is no sentence~$\varphi$ 
in $\mathcal L_{\omega,\omega}$ such that 
$\neg\,\vartheta(\varphi)$ would be expressible by 
an essentially infinite theory in the same language: 
}

\begin{prp}\label{p: neg-theta inexpressed by thy}
For any $\tau$ and $\varphi$ in 
$\mathcal L_{\omega,\omega}(\tau)$, 
$\neg\,\vartheta(\varphi)$ is expressible 
in $\mathcal L_{\omega,\omega}(\tau)$ by 
a~theory iff it is expressible by a~sentence. 
\end{prp}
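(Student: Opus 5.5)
The nontrivial direction is from a theory to a single sentence; the converse is immediate, since a sentence is a one-element theory. The plan is to show that the class of models of $\neg\,\vartheta(\varphi)$ is elementary and also closed under taking submodels, and then to combine compactness with the \L{}o\'s--Tarski preservation theorem.

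First, $\neg\,\vartheta(\varphi)$ is preserved under submodels, i.e.\ $\vartheta(\varphi)$ is preserved under extensions. Indeed, if $\mathfrak A\subseteq\mathfrak A'$ and $\mathfrak B\subseteq\mathfrak A$ satisfies $\varphi$, then $\mathfrak B\subseteq\mathfrak A'$ as well, so $\mathfrak A'\vDash\vartheta(\varphi)$. Now suppose $\neg\,\vartheta(\varphi)$ is expressed by a theory $T$. Then $\mathrm{Mod}(T)$ is an elementary class closed under submodels. By the \L{}o\'s--Tarski theorem in its theory form, $T$ is equivalent to a set $U$ of universal sentences; concretely, $U$ is the set of universal consequences of $T$.

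Second, I show that $U$ is equivalent to a finite subset, using compactness on the complement. The class of models of $\vartheta(\varphi)$, being the complement of $\mathrm{Mod}(U)$, is a union of elementary classes: $\mathfrak A\vDash\vartheta(\varphi)$ iff $\mathfrak A$ has a submodel $\mathfrak B\vDash\varphi$. Expand the signature by a fresh unary predicate $P$, together with axioms saying that $P$ is nonempty and closed under all function symbols, and add the relativization $\varphi^P$. Call this sentence $\chi$. Then $\mathfrak A\vDash\vartheta(\varphi)$ iff $\mathfrak A$ has an expansion to a model of $\chi$. Suppose no finite subset of $U$ is equivalent to $U$. Then the theory $\{\chi\}\cup U$ is finitely satisfiable. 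Otherwise $\chi\vDash\neg\,u_1\vee\dots\vee\neg\,u_m$ for some $u_1,\dots,u_m\in U$, so every model of $\vartheta(\varphi)$ fails the finite conjunction $u_1\wedge\dots\wedge u_m$. Hence $u_1\wedge\dots\wedge u_m$ implies $\neg\,\vartheta(\varphi)$, which is equivalent to $U$, and the finite subset would already be equivalent to $U$. Thus, assuming non-finite axiomatizability, for every finite $U_0\subseteq U$ there is a model of $U_0$ that is not a model of $U$. We want this model also to satisfy $\chi$ in some expansion, i.e.\ to satisfy $\vartheta(\varphi)$. That holds precisely because it fails $U$, which is equivalent to $\neg\,\vartheta(\varphi)$. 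So $\{\chi\}\cup U_0$ is satisfiable for each finite $U_0$. By compactness, $\{\chi\}\cup U$ has a model. Its reduct satisfies both $\vartheta(\varphi)$ and $U\equiv\neg\,\vartheta(\varphi)$, a contradiction.

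This yields a finite $U_0\subseteq U$ equivalent to $U$, and its conjunction is a single (universal) sentence expressing $\neg\,\vartheta(\varphi)$. In fact the argument is just the general fact that a class which is both elementary and pseudo-elementary (PC) in complement is finitely axiomatizable. Stated that way, the \L{}o\'s--Tarski step is not even needed, so I would present it directly with $T$ in place of $U$. The main point to check carefully is the PC-representation of $\vartheta(\varphi)$ when $\tau$ has function and constant symbols. The predicate $P$ must be closed under the functions and contain the constants, so that it defines a genuine submodel; and if $\tau$ allows empty submodels or has no constants, the nonemptiness clause must match the paper's convention on models.
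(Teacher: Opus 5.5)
Your proof is correct and is essentially the paper's argument: expand $\tau$ by a fresh unary predicate $P$, note that $T\cup\{\varphi^P\}$ (with $P$ required to carry a submodel) has no model, and use compactness to cut $T$ down to a finite fragment. You run it by contradiction rather than directly, and, as you note yourself, the detour through universal consequences is unnecessary; your explicit requirement that $P$ be nonempty and closed under the function and constant symbols is left implicit in the paper's proof, and it is genuinely needed when $\tau$ contains function symbols.
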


\begin{proof} 
In order to prove the non-trivial implication, 
assume there is a~theory~$T$ in 
$\mathcal L_{\omega,\omega}(\tau)$ such that, 
for all models $\mathfrak A$ of~$\tau$,
$$
\mathfrak A\vDash\neg\,\vartheta(\varphi)
\;\text{ iff }\;
\mathfrak A\vDash T 
$$ 
(i.e., $\mathfrak A\vDash T$ means that
every submodel~$\mathfrak B$ of $\mathfrak A$ 
satisfies~$\neg\,\varphi$), and show that 
such a~$T$ is axiomatized by a~single sentence~$\psi$.

Let $\tau':=\tau\cup\{P\}$ where $P$~is a~new unary 
predicate symbol. The theory $T\cup\{\varphi^P\}$ 
is inconsistent. (Indeed, otherwise pick any 
its model~$\mathfrak A'$; if $\mathfrak A$ 
is the $\tau$-reduction of~$\mathfrak A'$, 
then $\mathfrak A$ satisfies $T$ and includes 
a~submodel satisfying~$\varphi$, a~contradiction.) 
So by compactness, there is a~finite fragment of~$T$ 
that implies $\neg\,\varphi^P$. Assuming w.l.g.~that 
this fragment is a~single sentence~$\psi$, we see 
that $\psi$ axiomatizes~$T$. (Indeed, pick any model 
$\mathfrak A\vDash\psi$ and its submodel~$\mathfrak B$; 
if $\mathfrak A^P$ is the $\tau'$-expansion of 
$\mathfrak A$ with $P$ interpreted by~$B$, 
then, since $\mathfrak A^P\vDash\psi$, we have 
$\mathfrak A^P\vDash\neg\,\varphi^P$, and so, 
$\mathfrak B\vDash\neg\,\varphi$, as required.) 
\end{proof}

%\newpage

\section*{Infinitary case}

In this section, we consider languages 
$\mathcal L_{\kappa,\lambda}$ with 
$\kappa>\omega$. First, given a~signature 
$\tau$ and cardinals $\kappa,\lambda,\nu$,  
% with $\omega\le\lambda\le\kappa$, 
we calculate $\kappa',\lambda'$ 
such that whenever $\varphi$ is in 
$\mathcal L_{\kappa,\lambda}(\tau)$ then 
$\vartheta_{\le\nu}(\varphi)$ is expressible 
by an (explicitly written) sentence in 
$\mathcal L_{\kappa',\lambda'}(\tau)$; 
thence we obtain sufficient conditions 
for $\mathcal L_{\kappa,\lambda}(\tau)$
to be closed under~$\vartheta_{\le\nu}$. 
Next, we present some infinitary versions 
of the downward L{\"o}wenheim--Skolem 
theorem and use them to isolate cases when 
$\vartheta$ and $\vartheta_{\le\nu}$ coincide. 
Finally, combining both observations, we establish 
when $\vartheta(\varphi)$ is expressible in an 
appropriate $\mathcal L_{\kappa',\lambda'}(\tau)$; 
thence we obtain conditions sufficient for 
$\mathcal L_{\kappa,\lambda}(\tau)$ to be closed 
under~$\vartheta$, noticing incidentally that for 
any given $\lambda$ and $\tau$ there are properly 
many such $\mathcal L_{\kappa,\lambda}(\tau)$.

Recall that $\vartheta_{\le\nu}(\varphi)$ 
expresses the existence of a~submodel having  
cardinality~$\le\nu$ and satisfying~$\varphi$; 
note that $\vartheta_{\le\nu}(\varphi)$ can be 
understood as 
$\vartheta(\varphi\wedge\sigma_{\le\nu})$ 
where the sentence $\sigma_{\le\nu}$ asserts that 
there are~$\le\nu$ elements (it belongs to 
$\mathcal L_{\nu^+,\nu^+}(\{=\})$, see 
\cite{Dickmann 1975}, Example~1.2.18); 
likewise for~$\vartheta_{\nu}(\varphi)$.  
 %\hide{\DS{
 %[NB: In \cite{Saveliev 2019}], 
 %the notation provided 
 %a~($\le\nu$)-generated model; here the latter is 
 %denoted by $\vartheta_{\le\nu\gen}(\varphi)$.]}
 %}

Given a~signature~$\tau$, a~formula~$\varphi$ 
of $\mathcal L_{\kappa,\lambda}(\tau)$, and 
a~set $\{x_\alpha\}_{\alpha<\nu}$ of first-order 
variables, we define the {\em relativization} 
$\varphi^{\{x_\alpha\}_{\alpha<\nu}}$ of $\varphi$ 
to $\{x_\alpha\}_{\alpha<\nu}$ by recursion:
\begin{enumerate}
\setlength\itemsep{-0.2em}
\item[(i)] 
  \hide{
  if $\varphi$ is atomic, say $P(t_0,\ldots,t_n)$ 
  where $t_0,\ldots,t_n$ are terms whose all 
  free variables are $y_0,\ldots,y_{k-1}$, 
  then $\varphi^{\{x_\alpha\}_{\alpha<\nu}}$ is 
  $
  \varphi\wedge
  \bigwedge_{i<k}
  \bigvee_{\!\alpha<\nu}
  y_i=x_\alpha
  $;
  %%% it seems, this is redundant; 
  %%% the reasonable definition is as follows;
}
if $\varphi$ is atomic, 
then $\varphi^{\{x_\alpha\}_{\alpha<\nu}}$ 
is $\varphi$;
\item[(ii)] 
if $\varphi$ is $\neg\,\psi$, then 
$\varphi^{\{x_\alpha\}_{\alpha<\nu}}$ is 
$\neg\,\psi^{\{x_\alpha\}_{\alpha<\nu}}$; 
\item[(iii)] 
if $\varphi$ is $\bigwedge_{\beta<\gamma}\psi_\beta$, 
then $\varphi^{\{x_\alpha\}_{\alpha<\nu}}$ is 
$
\bigwedge_{\beta<\gamma}
\psi^{\{x_\alpha\}_{\alpha<\nu}}_\beta
$; 
\item[(iv)] 
if $\varphi$ is 
$\bigexists_{\beta<\delta}y_\beta\,\psi$, 
then $\varphi^{\{x_\alpha\}_{\alpha<\nu}}$ is 
$
\bigexists_{\beta<\delta}y_\beta\, 
\bigl(
\psi^{\{x_\alpha\}_{\alpha<\nu}}
\wedge 
\bigwedge_{\beta<\delta}
\bigvee_{\!\alpha<\nu}
y_\beta=x_\alpha
\bigr)
$. 
\end{enumerate}
Note that all relativizations of an open 
formula~$\varphi$ coincide with $\varphi$ itself.

\begin{lm}\label{l: relativization}
Let $\varphi$ be a~sentence in 
$\mathcal L_{\kappa,\lambda}(\tau)$, 
and let $\mu:=|\fnc(\tau)|$. Then 
$\vartheta_{\le\nu}(\varphi)$ 
(and $\vartheta_\nu(\varphi)$)
is expressible by a~sentence in 
$\mathcal L_{\kappa',\lambda'}(\tau)$ where 
$\kappa':=\max(\kappa,\mu^+,\nu^+)$ and 
$\lambda':=\max(\lambda,\nu^+)$.
\end{lm}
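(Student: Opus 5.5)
The natural sentence is
$$
\bigexists_{\alpha<\nu}x_\alpha\,
\Bigl(\chi\bigl(\{x_\alpha\}_{\alpha<\nu}\bigr)\wedge
\varphi^{\{x_\alpha\}_{\alpha<\nu}}\Bigr),
$$
where $\chi$ says that $\{x_\alpha\}_{\alpha<\nu}$ is the universe of a submodel, i.e.\ it is closed under all function symbols (and so contains all constants). For an $n$-ary $f\in\fnc(\tau)$, closure is
$$
\bigwedge_{s\in\nu^n}\,\bigvee_{\alpha<\nu}
f(x_{s(0)},\ldots,x_{s(n-1)})=x_\alpha .
$$
For a constant $c$ it is $\bigvee_{\alpha<\nu}c=x_\alpha$. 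We take $\chi$ to be the conjunction of these over $\fnc(\tau)$.

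Since repetitions among the $x_\alpha$ are allowed, the witnesses range over all nonempty subsets of cardinality $\le\nu$. If empty models are permitted, we add a separate disjunct $\varphi^{\emptyset}$. This disjunct is needed only when $\fnc(\tau)$ has no constants, and it is computed from the truth of $\varphi$ in the empty structure, which is $\top$ or $\bot$. For $\vartheta_\nu(\varphi)$ we further conjoin $\bigwedge_{\alpha<\beta<\nu}x_\alpha\neq x_\beta$; this is harmless for infinite $\nu$, and for finite $\nu$ we take a disjunction over injective enumerations.

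The key semantic step is to show, by induction on subformulas $\psi(\bar y)$ of $\varphi$, that for any assignment with $\bar y$ and the $x_\alpha$ taking values in $B=\{a_\alpha\}_{\alpha<\nu}$, where $B$ is closed under the functions of $\mathfrak A$, we have $\mathfrak A\vDash\psi^{\{x_\alpha\}}[\bar a,\bar b]$ iff $\mathfrak B\vDash\psi[\bar b]$. Here $\mathfrak B$ is the submodel with universe $B$. The cases go as follows.
\begin{itemize}
\item Atomic formulas: terms evaluated in $B$ agree in $\mathfrak A$ and $\mathfrak B$ because $B$ is closed.
\item Negation and conjunction: immediate.
\item Quantifier clause (iv): the added conjunct $\bigwedge_{\beta}\bigvee_{\alpha}y_\beta=x_\alpha$ restricts the witnesses exactly to $B$.
\end{itemize}
Once this holds, the displayed sentence is true in $\mathfrak A$ iff $\mathfrak A$ has a submodel of size $\le\nu$ satisfying $\varphi$.

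The remaining and most delicate step is bookkeeping of the infinitary parameters. The main quantifier block has $\nu$ variables, so it needs $\lambda'\ge\nu^+$. Clause (iv) introduces disjunctions of arity $\nu$ and conjunctions of arity $\delta<\lambda\le\kappa$, which needs $\kappa'\ge\nu^+$. The closure formula $\chi$ is a conjunction over $\mu$ function symbols, each contributing a conjunction of size $\nu^{n}\le\max(\nu,\omega)$ and disjunctions of size $\nu$, which needs $\kappa'\ge\mu^+$ (and $\ge\nu^+$).

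We also check that the relativization does not increase arities beyond $\kappa'$, so each conjunction of $\gamma<\kappa$ formulas stays below $\kappa'$. We check that the formula remains a sentence, with finitely or $<\lambda'$ many free variables at each stage, as the definition of $\mathcal L_{\kappa',\lambda'}$ requires. With all these in place, the sentence lies in $\mathcal L_{\kappa',\lambda'}(\tau)$ for $\kappa'=\max(\kappa,\mu^+,\nu^+)$ and $\lambda'=\max(\lambda,\nu^+)$. For finite $\nu$ everything stays finitary, apart from the conjunction over $\mu$ function symbols.
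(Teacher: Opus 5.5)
Your proposal is correct and follows the paper's proof. It uses the same closure formula over $\fnc(\tau)$, the same sentence $\bigexists_{\alpha<\nu}x_\alpha\,\bigl(\chi\wedge\varphi^{\{x_\alpha\}_{\alpha<\nu}}\bigr)$, and the same distinctness conjunct for $\vartheta_\nu$; your semantic induction and arity/free-variable bookkeeping spell out what the paper calls ``clearly'' (the separate treatment of finite $\nu$ is unnecessary, since $\bigwedge_{\alpha<\beta<\nu}x_\alpha\neq x_\beta$ works for every $\nu$).
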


\begin{proof}
Let $\psi(x_\alpha)_{\alpha<\nu}$ express that 
$\{x_\alpha\}_{\alpha<\nu}$ forms a~submodel, 
i.e., it is the formula 
$$
\bigwedge_{\beta<\mu}\;
\bigwedge_{f\in\nu^{n_\beta}}\;
\bigvee_{\gamma<\nu}\,
F_\beta(x_{f(0)},\ldots,x_{f(n_\beta-1)})
=x_\gamma
$$
where $F_\beta$, $\beta<\mu$, enumerate 
$\fnc(\tau)$, and $n_\beta$~is the arity 
of~$F_\beta$. Then $\vartheta_{\le\nu}(\varphi)$ 
is equivalent to the sentence 
$$
\bigexists_{\alpha<\nu}x_\alpha\,
\bigl(
\psi(x_\alpha)_{\alpha<\nu}
\wedge
\varphi^{\{x_\alpha\}_{\alpha<\nu}}
\bigr).
$$
Clearly, 
$\psi(x_\alpha)_{\alpha<\nu}$ is in 
$\mathcal L_{\kappa',\omega}(\fnc(\tau))$, 
and $\varphi^{\{x_\alpha\}_{\alpha<\nu}}$ is in 
$\mathcal L_{\max(\kappa,\nu^+),\lambda'}(\tau)$.

For $\vartheta_\nu(\varphi)$, 
it suffices to add the formula  
$
\neg\!\bigvee_{\alpha<\beta<\nu}
x_\alpha=x_\beta
$
to the conjuction above. 
\end{proof}

\begin{cor}\label{c: expressibility of theta_nu}
Let $\omega\le\lambda\le\kappa$, 
$\mu:=|\fnc(\tau)|<\kappa$, $\nu<\lambda$.  
Then $\mathcal L_{\kappa,\lambda}(\tau)$ is closed 
under $\vartheta_{\le\nu}$ (and $\vartheta_\nu$). 
\end{cor}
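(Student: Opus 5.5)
This follows by applying Lemma~\ref{l: relativization} with the cardinals computed there. The only work is to check that the resulting language $\mathcal L_{\kappa',\lambda'}(\tau)$ is just $\mathcal L_{\kappa,\lambda}(\tau)$. So fix a sentence $\varphi$ of $\mathcal L_{\kappa,\lambda}(\tau)$. The lemma gives a sentence in $\mathcal L_{\kappa',\lambda'}(\tau)$ expressing $\vartheta_{\le\nu}(\varphi)$, and one expressing $\vartheta_\nu(\varphi)$, where
$$
\kappa'=\max(\kappa,\mu^+,\nu^+),
\qquad
\lambda'=\max(\lambda,\nu^+).
$$

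First we show $\lambda'=\lambda$. Since $\nu<\lambda$ and $\lambda$ is an infinite cardinal, we have $\nu^+\le\lambda$, so $\lambda'=\lambda$. Next we show $\kappa'=\kappa$. From $\mu<\kappa$ we get $\mu^+\le\kappa$. From $\nu<\lambda\le\kappa$ we get $\nu^+\le\lambda\le\kappa$. Hence $\kappa'=\kappa$, and both sentences lie in $\mathcal L_{\kappa,\lambda}(\tau)$, which is the closure claim.

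The main point to check is that the bounds in the lemma are not slack in a way that breaks down here, i.e.\ that the displayed sentence really uses only conjunctions and disjunctions of size $<\kappa$ and quantifier blocks of size $<\lambda$. We verify the sizes directly in the proof of Lemma~\ref{l: relativization}:
\begin{enumerate}
\item[(a)] The outer block $\bigexists_{\alpha<\nu}x_\alpha$ has length $\nu<\lambda$.
\item[(b)] In the formula $\psi$, the conjunction over $\beta<\mu$ has size $\mu<\kappa$. The inner conjunctions range over $\nu^{n_\beta}$ with $n_\beta$ finite, so they have size $\le\max(\nu,\omega)<\kappa$. The disjunctions over $\gamma<\nu$ are also small.
\item[(c)] In the relativization $\varphi^{\{x_\alpha\}_{\alpha<\nu}}$, each quantifier block keeps its original length $\delta<\lambda$. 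The added conjunction $\bigwedge_{\beta<\delta}\bigvee_{\alpha<\nu}$ has size $<\lambda\le\kappa$.
\item[(d)] For $\vartheta_\nu$, the extra disjunction over pairs $\alpha<\beta<\nu$ has size $\le\max(\nu,\omega)<\kappa$.
\end{enumerate}

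Finally, there is the edge case where $\nu$ is finite. Then $\nu^+$ should be read as $\nu+1$, or it is simply dominated by $\omega\le\lambda$, and the same inequalities hold. So the corollary follows.
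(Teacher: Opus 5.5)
Your proof is correct and is the paper's argument: apply Lemma~\ref{l: relativization} and note that $\mu<\kappa$ and $\nu<\lambda\le\kappa$ give $\kappa'=\kappa$ and $\lambda'=\lambda$. The direct size check in (a)--(d) is unnecessary given the lemma. Also, the bound $\max(\nu,\omega)<\kappa$ you use there is literally false when $\kappa=\omega$; this is harmless, since then $\nu$ is finite and those index sets are finite.
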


\begin{proof}
By Lemma~\ref{l: relativization} 
since $\mu<\kappa$, $\nu<\lambda\le\kappa$ 
give $\kappa'=\kappa$, $\lambda'=\lambda$.
\end{proof}

%\newpage

The following theorem summarizes some infinitary 
downward L{\"o}wenheim--Skolem-type results.

\begin{tm}\label{t: dLS vs theta}
Let $\tau$ be a~signature, $\mathfrak A$ 
a~model of~$\tau$, $\mu:=|\fnc(\tau)|$, 
$\kappa$~a~regular cardinal,
and $\omega\le\lambda\le\kappa$. 
\begin{enumerate}
\setlength\itemsep{-0.2em}
\item[(i)] 
Assume $X\subseteq A$ and 
$\max(|X|,\mu)\le\nu=\nu^{<\kappa}\le|A|$. 
Then there exists 
$
\mathfrak B
\preceq_{\mathcal L_{\kappa,\kappa}(\tau)}\!
\mathfrak A
$
such that $X\subseteq B$ and $|B|\le\nu$.
\item[(ii)] 
Assume $\lambda$~is regular, $\mu<\kappa$, 
and $\varphi$ a~sentence in 
$\mathcal L_{\kappa,\lambda}(\tau)$. 
Then there exists 
$
\mathfrak B
\preceq_{\mathcal L_{\omega,\omega}(\tau)}\!
\mathfrak A 
$ 
such that $\mathfrak B\vDash\varphi$ and 
$|B|<\kappa$ whenever one of (a)--(c) holds:
\begin{enumerate}
\setlength\itemsep{-0.2em}
\item[(a)] 
$\lambda<\kappa$ and  
${\kappa_0}^{\lambda_0}<\kappa$, for all 
$\lambda_{0}<\lambda$ and $\kappa_0<\kappa$; 
\item[(b)] 
$\kappa>\omega$~is strongly inaccessible 
and $\lambda$~is regular;
\item[(c)] 
$\kappa$~is not the successor of a~singular 
cardinal and $\lambda$~is strongly compact.
\end{enumerate}
\end{enumerate}
\end{tm}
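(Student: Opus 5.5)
The plan is to prove both parts by closing a small set under witness functions, in the style of the classical downward L\"owenheim--Skolem theorem, carrying out the closure along a chain whose length is regular. The arithmetic hypotheses in each clause serve to keep the cardinality of each stage, and of the union, below the required bound.

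For (i), I would build an increasing chain $\langle B_\xi:\xi<\kappa\rangle$. Let $B_0\supseteq X$ be closed under the functions of $\fnc(\tau)$; this gives $|B_0|\le\nu$ because $\mu\le\nu$. To pass from $B_\xi$ to $B_{\xi+1}$, for every tuple $\bar a\in B_\xi^{<\kappa}$ and every existential formula $\bigexists_{\beta<\delta}y_\beta\,\theta(\bar y,\bar a)$ with $\delta<\kappa$ that holds in $\mathfrak A$, I add a witnessing $\delta$-tuple, and then close again under $\fnc(\tau)$. Take unions at limit stages and put $B:=\bigcup_{\xi<\kappa}B_\xi$. Since $\kappa$ is regular, every $<\kappa$-tuple from $B$ already lies in some $B_\xi$. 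The Tarski--Vaught criterion for $\mathcal L_{\kappa,\kappa}$ then gives $\mathfrak B\preceq_{\mathcal L_{\kappa,\kappa}(\tau)}\mathfrak A$. There are at most $\nu^{<\kappa}=\nu$ parameter tuples, and each added witness has length $<\kappa$, so the size stays $\le\nu$ \emph{provided} that for each $\bar a$ only $\le\nu$ witnesses are needed.

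I expect this to be the main obstacle, because the formulas of $\mathcal L_{\kappa,\kappa}(\tau)$ form a proper class (even over a fixed $\tau$). My first attempt would be to count the witness tuples actually needed rather than the formulas. For fixed $\bar a$ and $\delta$, it suffices to pick one tuple from each realized ``$\mathcal L_{\kappa,\kappa}$-type over $\bar a$'' that is relevant at the given stage. I would replace full types by the back-and-forth (partial isomorphism) characterization of $\mathcal L_{\kappa,\kappa}$-equivalence of tuples, and choose witnesses so that $B$ is a back-and-forth subsystem. Then $\nu^{<\kappa}=\nu$ bounds the number of choices. If this counting turns out to be delicate, I would instead cite the standard version of this theorem from Dickmann's book, as the summarizing character of the theorem suggests.

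For (ii), the key simplification is that only the fewer than $\kappa$ subformulas of $\varphi$ matter; there are fewer than $\kappa$ of them because $\kappa$ is regular. I would take Skolem functions only for these subformulas (each of the form $\bigexists_{\beta<\delta}y_\beta\,\psi$ with $\delta<\lambda$), together with Skolem functions for all first-order formulas (at most $\mu+\omega<\kappa$ of them), and the functions of $\fnc(\tau)$. Next, I would close $\emptyset$ under all of these along a chain of length $\lambda$. Regularity of $\lambda$ guarantees that every $<\lambda$-tuple is caught at some stage. The resulting $\mathfrak B$ is then $\mathcal L_{\omega,\omega}$-elementary in $\mathfrak A$ and, by induction on subformulas, satisfies $\varphi$. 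The size estimate is as follows: if $|B_\xi|=\kappa_0<\kappa$, then $|B_{\xi+1}|\le\kappa_0^{\lambda_0}\cdot(\text{number of subformulas})$ for appropriate $\lambda_0<\lambda$, summed over $\lambda_0<\lambda$. Under (a) this is $<\kappa$ at every stage, and a union of $\lambda<\kappa$ stages is still $<\kappa$ by regularity of $\kappa$.

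For (b), if $\lambda<\kappa$ this is a special case of (a). If $\lambda=\kappa$, I would first note that the quantifier blocks in $\varphi$ have lengths $<\kappa$, and fewer than $\kappa$ of them occur. So their supremum $\lambda_0$ is $<\kappa$, and $\varphi\in\mathcal L_{\kappa,\lambda_0^+}$; then (a) applies with $\lambda_0^+$. For (c), I would invoke Solovay's theorem that SCH holds above a strongly compact cardinal. It gives $\kappa_0^{<\lambda}\le\max(\kappa_0^+,2^{<\lambda})$, and in fact $\kappa_0^{<\lambda}=\kappa_0$ unless $\operatorname{cf}\kappa_0<\lambda\le\kappa_0$. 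So the only way to have $\kappa_0^{<\lambda}\ge\kappa$ with $\kappa_0<\kappa$ is $\kappa=\kappa_0^+$ with $\kappa_0$ singular, which is exactly the excluded case. With this bound in hand, I would run the argument of (a) verbatim, handling $\lambda=\kappa$ as in (b).
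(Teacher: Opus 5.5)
Your proof of (ii)(a) is a direct version of what the paper obtains by citing Dickmann's Corollary~3.4.4, and your reductions of (b) and (c) to (a) are the paper's own. The genuine problem, in both parts, is the role of $\mu=|\fnc(\tau)|$. In (ii) you bound the number of first-order formulas by $\mu+\omega$, but it is $|\tau|+\omega$, since relation symbols count as well. This cannot be patched, because the statement fails for large relational signatures. Let $\tau=\{P_a:a\in A\}$ with unary $P_a$ and $P_a^{\mathfrak A}=\{a\}$. Since $\mathfrak A\vDash\exists x\,P_a(x)$ for every $a$, any $\mathfrak B\preceq_{\mathcal L_{\omega,\omega}(\tau)}\mathfrak A$ has $B=A$. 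With $|A|=\kappa=\omega_1$ and $\lambda=\omega$ (so $\mu=0$ and (a) holds) this refutes (ii). With $\kappa=\lambda=\omega$, $\nu=\aleph_0$, $X=\emptyset$, $|A|=\aleph_1$ it refutes (i). The same example shows that the paper's asserted weakening of Dickmann's hypothesis from $|\tau|$ to $|\fnc(\tau)|$ is not valid, so falling back on the citation does not give the statement as written either. Your arguments can establish (i) under $|\tau|\le\nu$ and (ii) under $|\tau|<\kappa$. Alternatively, demand elementarity only for $\tau_0:=\fnc(\tau)\cup\{\text{symbols of }\varphi\}$, which has size $<\kappa$. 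You still get a $\tau$-submodel of size $<\kappa$ satisfying $\varphi$, which is all that Corollary~\ref{c: dLS vs theta}(ii) and Theorem~\ref{t: expressing theta}(ii) actually use; the same restriction handles the applications of (i).

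Second, the counting step in (i) is left open, and the remedy you sketch points the wrong way. The partial-isomorphism characterization is one of $\mathcal L_{\infty,\kappa}$-equivalence. A $B$ forming a back-and-forth subsystem in that sense would be $\mathcal L_{\infty,\kappa}$-elementary, which is in general impossible with $|B|\le\nu$. For example, with $\kappa=\omega$, $(\omega_1,<)$ has no countable $\mathcal L_{\infty,\omega}$-elementary submodel, since for countable $\alpha$ the property ``order type $\le\alpha$'' is expressible in $\mathcal L_{\omega_1,\omega}$ among well-orders. The actual fix is elementary and uses the observation you already make in (ii). By regularity of $\kappa$, every formula of $\mathcal L_{\kappa,\kappa}(\tau)$ has fewer than $\kappa$ subformulas and hence fewer than $\kappa$ variables. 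So after renaming variables into a fixed set of size $\kappa$, it is a tree with $<\kappa$ nodes labelled from a set of size $\le|\tau|+\kappa^{<\kappa}$. Since $\nu=\nu^{<\kappa}\ge2$ forces $\kappa\le\nu$ and $2^{<\kappa}\le\nu$, there are at most $\nu$ formulas up to renaming when $|\tau|\le\nu$. Each stage then adds at most $\nu$ witnesses, and $|B|\le\kappa\cdot\nu=\nu$.

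A minor slip in (c): $\kappa_0^{<\lambda}=\kappa_0$ fails for $\kappa_0<\lambda$ (e.g.\ $\omega^{<\lambda}\ge2^{\omega}$). There you need the inaccessibility of $\lambda$ to get $\kappa_0^{\lambda_0}<\lambda<\kappa$. After that, your case analysis agrees with the paper's, which uses $\rho^{<\lambda}=\rho$ for regular $\rho\ge\lambda$.
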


\begin{proof}
(i). 
See \cite{Dickmann 1975}, Corollary~3.4.2, 
in which we weakened the condition 
$\mu:=|\tau|$ to $\mu:=|\fnc(\tau)|$. 
This can be done, since it can also be done 
with Theorem~3.4.1, as a~careful analysis 
of its proof shows.

(ii). 
For~(a), see \cite{Dickmann 1975}, Corollary~3.4.4, 
with the same minor improvement in the value of~$\mu$.

For~(b), if $\lambda<\kappa$, this follows 
from~(a); and if $\lambda=\kappa$, it suffices 
to note that $\mathcal L_{\kappa,\kappa}$ is 
the union of $\mathcal L_{\kappa,\nu^+}$, 
$\nu<\kappa$, each of which satisfies~(a).

For~(c), if $\kappa=\lambda$, we clearly have
the conditions of~(b). And if $\kappa>\lambda$, 
we have the conditions of~(a), i.e., 
$
(\forall\lambda_{0}<\lambda)
(\forall\kappa_0<\kappa)\,
{\kappa_0}^{\lambda_0}<\kappa. 
$ 
Indeed, recall that, since $\lambda$~is 
strongly compact, $\rho^{<\lambda}=\rho$ 
for all regular $\rho\ge\lambda$ (see, 
e.g., \cite{Jech}, Lemma~20.11). Now, if 
$\kappa=\rho^+$ for some regular~$\rho$, then 
$
\kappa_{0}^{\lambda_0}\le
\rho^{<\lambda}=\rho<\kappa,
$
and if $\kappa$~is limit, 
$
\kappa_{0}^{\lambda_0}\le
(\kappa_{0}^+)^{<\lambda}<\kappa.
$
\end{proof}

\begin{rmk}\label{r: properly many kappa}
The conditions of (ii)(a), which serves as 
a~basis for (ii)(b) and (ii)(c), hold for 
a~plenty of cardinals (in $\ZFC$ alone). 
Namely, for any~$\lambda$ there are properly many 
$\kappa$ satisfying these conditions, e.g., so is 
\begin{align*}
\kappa:=
\sup_{\lambda_0<\lambda}(\nu^{\lambda_0})^+, 
\;\;\text{ for any }\nu>\max(2,\mu).
\end{align*}
Indeed, we clearly have $\mu<\kappa$, 
and to check the condition 
$
(\forall\lambda_{0}<\lambda)
(\forall\kappa_0<\kappa)\,
{\kappa_0}^{\lambda_0}<\kappa, 
$ 
it suffices to consider two cases: 
one when the sequence 
$(\nu^{\lambda_0})_{\lambda_0<\lambda}$ 
is eventually constant, and another one 
when it is unbounded; we leave 
the routine verification to the reader.

\hide{
\vskip+0.2em 
\noindent{{\it Case~1:}}
The sequence 
$(\nu^{\lambda_0})_{\lambda_0<\lambda}$ 
is eventually constant. So 
$
(\exists\lambda_1<\lambda)\, 
\kappa=(\nu^{\lambda_1})^+ 
$ 
and 
$
(\forall\lambda_0<\lambda)\, 
\nu^{\lambda_0}\le\nu^{\lambda_1}, 
$ 
then 
$
(\forall\kappa_0<\kappa)\, 
\kappa_0\le\nu^{\lambda_1}, 
$ 
hence, 
$
(\forall\kappa_0<\kappa)
(\forall\lambda_{0}<\lambda)\,
{\kappa_0}^{\lambda_0}\le 
(\nu^{\lambda_1})^{\lambda_0}=
\nu^{\lambda_1\lambda_0}= 
\nu^{\lambda_1+\lambda_0}= 
\nu^{\lambda_1}\nu^{\lambda_0}= 
\nu^{\lambda_1}<\kappa.
$ 
\vskip+0.2em 
\noindent{{\it Case~2:}}
The sequence 
$(\nu^{\lambda_0})_{\lambda_0<\lambda}$ 
is unbounded. Then 
$
\kappa:=
\sup_{\lambda_0<\lambda}\nu^{\lambda_0} 
$ 
and 
$
(\forall\kappa_0<\kappa)
(\exists\lambda_1<\lambda)\, 
\kappa_0<\nu^{\lambda_1}, 
$ 
hence, 
$
(\forall\kappa_0<\kappa)
(\forall\lambda_{0}<\lambda)
(\exists\lambda_1<\lambda)\, 
\kappa_0<\nu^{\lambda_1}
$
and $\lambda_0\le\lambda_1$ and so 
$
{\kappa_0}^{\lambda_0}\le 
(\nu^{\lambda_1})^{\lambda_1}=
\nu^{\lambda_1}<\kappa.
$ 

\vskip+0.2em 
}

In particular, these conditions hold 
if $\lambda:=\rho^+$ and $\kappa:=(\nu^\rho)^+$, 
for all $\rho$ and $\nu>\max(2,\mu)$; 
and also if $\lambda=\omega<\kappa$.
\end{rmk}

%\newpage

In Corollary~\ref{c: dLS vs theta}, we provide 
conditions sufficient for coincidence of 
$\vartheta$ and its bounded 
version~$\vartheta_{\le\nu}$ that 
clearly follow from Theorem~\ref{t: dLS vs theta} 
(in the particular case of $\kappa=\lambda$, 
(iic)~was stated in \cite{Saveliev 2019}, 
Theorem~10; moreover, it was pointed out 
in \cite{Saveliev 2019}, Remark~9 that  
in this case (iib)~holds).

\begin{cor}\label{c: dLS vs theta}
Let $\tau$ be a~signature, $\mu:=|\fnc(\tau)|$, 
$\kappa$~a~regular cardinal, and 
$\omega\le\lambda\le\kappa$.  
\begin{enumerate}
\setlength\itemsep{-0.2em}
\item[(i)] 
Assume $\mu\le\nu=\nu^{<\kappa}$. 
Then for any sentence~$\varphi$ 
in $\mathcal L_{\kappa,\kappa}(\tau)$,
$
\vartheta(\varphi)
\text{ iff }
\vartheta_{\nu}(\varphi)
$
on models of~$\tau$ of cardinality~$\ge\nu$, 
and so, 
$$
\vartheta(\varphi)
\;\text{ iff }\;
\vartheta_{\le\nu}(\varphi)
$$
on all models of~$\tau$.
\item[(ii)] 
Assume $\lambda$~is regular and $\mu<\kappa$. 
Then for any sentence~$\varphi$ 
in $\mathcal L_{\kappa,\lambda}(\tau)$,  
$$
\vartheta(\varphi)
\;\text{ iff }\;
\vartheta_{<\kappa}(\varphi) 
$$
whenever one of (a)--(c) of 
Theorem~\ref{t: dLS vs theta}(ii) holds.
\hide{
\begin{enumerate}
\item[(a)] 
$\lambda<\kappa$ and  
${\kappa_0}^{\lambda_0}<\kappa$, for all 
$\lambda_{0}<\lambda$ and $\kappa_0<\kappa$;
\item[(b)] 
$\kappa>\omega$~is strongly inaccessible 
and $\lambda$~is regular;
\item[(c)] 
$\kappa$~is not the successor of a~singular 
cardinal and $\lambda$~is strongly compact.
\end{enumerate}
}
\end{enumerate}
\end{cor}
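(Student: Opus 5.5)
The plan is to read both parts directly off Theorem~\ref{t: dLS vs theta}, using in each case that satisfaction of a sentence is preserved from a structure to its elementary substructures in the relevant language. In each part, one direction of the equivalence is trivial: $\vartheta_{\nu}(\varphi)$, $\vartheta_{\le\nu}(\varphi)$ and $\vartheta_{<\kappa}(\varphi)$ each assert the existence of a submodel satisfying $\varphi$, so each implies $\vartheta(\varphi)$. Only the converse directions need work.

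For (i), fix $\mathfrak A$ with $|A|\ge\nu$ and a submodel $\mathfrak C$ of $\mathfrak A$ with $\mathfrak C\vDash\varphi$. Two cases arise.
\begin{enumerate}
\setlength\itemsep{-0.2em}
\item[(1)] If $|C|\ge\nu$, apply Theorem~\ref{t: dLS vs theta}(i) to $\mathfrak C$ itself. Take $X\subseteq C$ with $|X|=\nu$; the hypothesis $\max(|X|,\mu)\le\nu=\nu^{<\kappa}\le|C|$ then holds. This yields $\mathfrak B\preceq_{\mathcal L_{\kappa,\kappa}(\tau)}\mathfrak C$ with $X\subseteq B$ and $|B|\le\nu$, hence $|B|=\nu$. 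Elementarity gives $\mathfrak B\vDash\varphi$, since $\varphi\in\mathcal L_{\kappa,\kappa}(\tau)$. Moreover $\mathfrak B$ is a submodel of $\mathfrak C$, hence of $\mathfrak A$.
\item[(2)] If $|C|<\nu$, I would try to enlarge $\mathfrak C$ inside $\mathfrak A$ up to size exactly $\nu$ while preserving $\varphi$. This is the step I expect to be the main obstacle. Enlarging a submodel does not preserve $\varphi$ in general, so I doubt the literal claim holds for all $\varphi$ in this case. My fallback is to prove the displayed ``and so'' statement directly, which only needs $\vartheta_{\le\nu}$ and is all that is used later. For a model of any cardinality, a witness $\mathfrak C$ either already has $|C|\le\nu$, or has $|C|>\nu$, in which case case (1) produces one of cardinality exactly $\nu$. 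I would state the cardinality-$\nu$ version carefully as referring to witnesses of size $\ge\nu$, or else check whether the authors intend $\vartheta_\nu$ to be read as ``$\le\nu$'' in the small case.
\end{enumerate}

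For (ii), the argument is simpler. Given a submodel $\mathfrak C\vDash\varphi$ of $\mathfrak A$, apply Theorem~\ref{t: dLS vs theta}(ii) to $\mathfrak C$ under whichever of (a)--(c) holds. This gives $\mathfrak B\preceq_{\mathcal L_{\omega,\omega}(\tau)}\mathfrak C$ with $\mathfrak B\vDash\varphi$ and $|B|<\kappa$. Being an elementary substructure, $\mathfrak B$ is a submodel of $\mathfrak C$ and hence of $\mathfrak A$. Therefore $\mathfrak A\vDash\vartheta_{<\kappa}(\varphi)$, which is the required converse.
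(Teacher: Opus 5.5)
Your argument is correct and is the intended one: the paper's proof is just ``immediate from Theorem~\ref{t: dLS vs theta}'', i.e., apply that theorem to the witnessing submodel $\mathfrak C$ rather than to $\mathfrak A$, exactly as in your case~(1) and in your treatment of~(ii), and your derivation of the $\vartheta_{\le\nu}$ version (the one actually used in Theorem~\ref{t: expressing theta}(i)) is complete. Your doubt about case~(2) is justified: for the empty signature, infinite $\nu=\nu^{<\kappa}$ (e.g.\ $\kappa=\nu=\omega$) and $\varphi:=\exists x\,\forall y\;y=x$, every model of cardinality $\ge\nu$ satisfies $\vartheta(\varphi)$ but not $\vartheta_\nu(\varphi)$, so the first clause of~(i) is true only when, as with $\vartheta_{K,R}$ for $K$ the class of models of cardinality $\ge\nu$, the witnessing submodels are also required to have cardinality $\ge\nu$, which is precisely what your case~(1) proves.
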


\begin{proof}
Immediate from Theorem~\ref{t: dLS vs theta}.
\end{proof}

\begin{rmk}
It follows from Theorem~\ref{t: dLS vs theta} 
that Corollary~\ref{c: dLS vs theta} remains 
true if we replace $\vartheta$ associated with 
submodels by its analog associated with 
$\mathcal L_{\kappa,\kappa}(\tau)$-elementary 
submodels in~(i), and 
$\mathcal L_{\omega,\omega}(\tau)$-elementary 
submodels in~(ii). 
\end{rmk}

%\newpage 

Now we are able to find out, for a~given 
language $\mathcal L_{\kappa,\lambda}(\tau)$ 
and an its sentence~$\varphi$, in which language 
$\mathcal L_{\kappa',\lambda'}(\tau)$ the 
sentence $\vartheta(\varphi)$ falls into, 
and to formulate thence conditions sufficient 
for $\mathcal L_{\kappa,\lambda}(\tau)$ 
to be closed under~$\vartheta$. 
For this, we combine conditions for  
the equivalence of $\vartheta(\varphi)$ and 
$\vartheta_{\le\nu}(\varphi)$, provided by 
Corollary~\ref{c: dLS vs theta}, and for the 
expressibility of $\vartheta_{\le\nu}(\varphi)$, 
provided by Lemma~\ref{l: relativization} and 
by Corollary~\ref{c: expressibility of theta_nu}.

\hide{
\begin{rmk}\label{r: incompatibility of conditions}
Let us point out that the conditions of 
Corollary~\ref{c: expressibility of theta_nu}, 
which guarantees that 
$\mathcal L_{\kappa,\lambda}(\tau)$ is closed 
under~$\vartheta_{\le\nu}$, include $\nu<\lambda$ 
and, therefore, are incompatible with the conditions 
of Corollary~\ref{c: dLS vs theta}(i) (which include 
$\nu=\nu^{<\kappa}$ and $\kappa=\lambda$, thus 
$\nu\ge\kappa$) as well as with the special case of 
Corollary~\ref{c: dLS vs theta}(ii) in which we have 
$\kappa>\lambda=\omega$ (since $\vartheta_{<\kappa}$ 
should be $\vartheta_{\le\nu}$ for some 
$\nu\ge\omega$). However, other cases of 
Corollary~\ref{c: dLS vs theta}(ii) fit well. 
\DS{[remove this remark from the finalized text?]}
\end{rmk}
}

\begin{tm}\label{t: expressing theta}
Let $\tau$~be a~signature, 
$\mu:=|\fnc(\tau)|$, 
$\kappa$~a~regular cardinal, and 
$\omega\le\lambda\le\kappa$.  
\begin{enumerate}
\setlength\itemsep{-0.2em}
\item[(i)] 
Assume $\mu\le\nu=\nu^{<\kappa}$ and $\varphi$ 
is in $\mathcal L_{\kappa,\kappa}(\tau)$. 
Then $\vartheta(\varphi)$ is 
% (equivalent to a~sentence) 
in $\mathcal L_{\nu^+,\nu^+}(\tau)$.
\item[(ii)] 
Assume $\lambda$~is regular, $\mu<\kappa.$
Then $\mathcal L_{\kappa,\lambda}(\tau)$ 
is closed under~$\vartheta$ 
whenever one of (a)--(c) of 
Theorem~\ref{t: dLS vs theta}(ii) holds. 
\hide{
\begin{enumerate}
\item[(a)] 
$\lambda<\kappa$ and  
${\kappa_0}^{\lambda_0}<\kappa$, for all 
$\lambda_{0}<\lambda$ and $\kappa_0<\kappa$;  
\item[(b)] 
$\kappa>\omega$~is strongly inaccessible 
and $\lambda$~is regular;
\item[(c)] 
$\kappa$~is not the successor of a~singular 
cardinal and $\lambda$~is strongly compact.
\end{enumerate}
}
\end{enumerate}
\end{tm}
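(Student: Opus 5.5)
The proof will combine Corollary~\ref{c: dLS vs theta}, which replaces $\vartheta$ by a bounded version, with Lemma~\ref{l: relativization}, which tells us in which language the bounded version lives.

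For (i), let $\varphi$ be in $\mathcal L_{\kappa,\kappa}(\tau)$ and $\mu\le\nu=\nu^{<\kappa}$. By Corollary~\ref{c: dLS vs theta}(i), $\vartheta(\varphi)$ is equivalent to $\vartheta_{\le\nu}(\varphi)$ on all models of~$\tau$. By Lemma~\ref{l: relativization}, $\vartheta_{\le\nu}(\varphi)$ is expressed by a sentence of $\mathcal L_{\kappa',\lambda'}(\tau)$ with $\kappa'=\max(\kappa,\mu^+,\nu^+)$ and $\lambda'=\max(\kappa,\nu^+)$. The conclusion then needs only the cardinal bookkeeping $\kappa,\mu^+\le\nu^+$. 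We have $\mu\le\nu$ by hypothesis. Also $\nu^{<\kappa}=\nu$ forces $\kappa\le\nu^+$: otherwise $\nu<\kappa$, so $\nu^{\nu}=2^{\nu}\le\nu^{<\kappa}=\nu$, which contradicts Cantor's theorem. Hence $\kappa'=\lambda'=\nu^+$. The only subtle point is the degenerate case where $\nu$ is finite. The condition $\nu=\nu^{<\kappa}$ with $\kappa\ge\omega$ forces $\nu\in\{0,1\}$, and then the same argument still gives $\kappa'=\lambda'=\nu^+$, or the statement is vacuous if we read $\nu$ as infinite. I will note this in a sentence rather than dwell on it.

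For (ii), let $\varphi$ be in $\mathcal L_{\kappa,\lambda}(\tau)$ with $\lambda$ regular, $\mu<\kappa$, and one of (a)--(c) holding. By Corollary~\ref{c: dLS vs theta}(ii), $\vartheta(\varphi)$ is equivalent to $\vartheta_{<\kappa}(\varphi)=\bigvee_{\nu<\kappa}\vartheta_{\nu}(\varphi)$. This disjunction has length $\kappa$, which is not $<\kappa$, so I cannot simply take it. This is the main obstacle, and it arises because Corollary~\ref{c: expressibility of theta_nu} only handles $\nu<\lambda$.

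The remedy is to bound the required $\nu$ uniformly in terms of $\varphi$. Any sentence of $\mathcal L_{\kappa,\lambda}(\tau)$ lies in $\mathcal L_{\kappa_1,\lambda}(\tau_0)$ for some $\kappa_1<\kappa$ and a subsignature $\tau_0$ of size $<\kappa$. This uses the regularity of $\kappa$ together with the fact that $\varphi$ has fewer than $\kappa$ subformulas. The function symbols of $\tau$ outside $\tau_0$ still have to be closed under, and there are $\mu<\kappa$ of them. I would then rerun the downward L\"owenheim--Skolem argument behind Theorem~\ref{t: dLS vs theta}(ii), which is the argument of \cite{Dickmann 1975}, Corollary~3.4.4, applied at the smaller level. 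Under (a), the hypothesis $\kappa_0^{\lambda_0}<\kappa$ gives some $\nu_\varphi<\kappa$ such that $\vartheta(\varphi)$ is equivalent to $\vartheta_{\le\nu_\varphi}(\varphi)$. Cases (b) and (c) reduce to (a), exactly as in the proof of Theorem~\ref{t: dLS vs theta}.

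The remaining issue is that Lemma~\ref{l: relativization} for $\vartheta_{\le\nu_\varphi}$ needs quantification over $\nu_\varphi$ variables, whereas only fewer than $\lambda$ are available. I would handle this by writing $\vartheta_{\le\nu}(\varphi)$ without a block quantifier of length $\nu$. A submodel of size $\le\nu$ satisfies $\varphi$ exactly when it is the closure of a set on which the relativized subformulas of $\varphi$ are witnessed. Accordingly, I would Skolemize $\varphi$ along its fewer-than-$\lambda$-ary quantifiers and express $\vartheta(\varphi)$ as ``there exist $<\lambda$ elements witnessing a tree of size $<\kappa$ of closure conditions.'' Whether this can be done inside $\mathcal L_{\kappa,\lambda}$ is the point I would check most carefully. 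If it fails, I would fall back on the special case $\nu_\varphi<\lambda$, which is available under (b) with $\lambda=\kappa$ and then follows directly from Corollary~\ref{c: expressibility of theta_nu}.
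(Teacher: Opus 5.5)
Your part~(i) is the paper's argument, and your Cantor computation supplies the inequality $\kappa\le\nu$ that the paper only asserts. Your aside on finite~$\nu$ is wrong, though. For $\nu=1$ one gets $\kappa'\ge\kappa\ge\omega>\nu^+$, and Corollary~\ref{c: dLS vs theta}(i) itself fails (take $\varphi:=\exists x\,\exists y\;x\neq y$), so $\nu$ must simply be taken infinite.

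The real problem is in~(ii), at exactly the step you flagged. After the uniform bound $\nu_\varphi<\kappa$, you still have to express $\vartheta_{\le\nu_\varphi}(\varphi)$ with quantifier blocks of length $<\lambda$. This cannot be done in general, because (ii) is false as stated once $\lambda<\kappa$ is allowed, so no Skolemization trick can close the gap. The paper's own Example~\ref{e: inexpressible theta}(i) is a counterexample:
\begin{itemize}
\item $\kappa=\omega_1$ and $\lambda=\omega$ satisfy~(a), as Remark~\ref{r: properly many kappa} says explicitly;
\item $\tau=\{<\}$ has $\mu=0$;
\item for $\varphi:=\forall x\,\exists y\;y<x$ in $\mathcal L_{\omega,\omega}(\tau)$, the sentence $\neg\,\vartheta(\varphi)$ expresses well-foundedness, which is inexpressible even in $\mathcal L_{\infty,\omega}(\tau)$.
\end{itemize}
The same $\varphi$ also refutes~(b) with $\lambda=\omega<\kappa$.

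The paper's proof does not get around this; it has the same hole, hidden. It passes from $\vartheta_{<\kappa}(\varphi)$ to a single $\vartheta_\nu(\varphi)$ with $\nu<\kappa$ without the uniform bound you supply (and it should be $\vartheta_{\le\nu}$). It then applies Corollary~\ref{c: expressibility of theta_nu} to this $\nu<\kappa$, although that corollary needs $\nu<\lambda$, which is precisely your obstacle.

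What is actually provable is your fallback: (ii) for $\lambda=\kappa$, i.e., cases (b) and~(c) with $\lambda=\kappa$. Write $\varphi\in\mathcal L_{\rho^+,\rho^+}(\tau)$ with $\rho$ infinite and $\mu\le\rho<\kappa$. Apply Corollary~\ref{c: dLS vs theta}(i) at level $\rho^+$ with $\nu:=2^\rho=\nu^{<\rho^+}$; this gives $\vartheta(\varphi)\lra\vartheta_{\le\nu}(\varphi)$. By inaccessibility $\nu<\kappa=\lambda$, so Corollary~\ref{c: expressibility of theta_nu} applies.

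For $\lambda<\kappa$ under~(a), your uniform bound together with Lemma~\ref{l: relativization} yields only $\vartheta(\varphi)\in\mathcal L_{\kappa,\max(\lambda,\nu_\varphi^+)}(\tau)\subseteq\mathcal L_{\kappa,\kappa}(\tau)$. This matches the $\mathcal L_{\omega_1,\omega_1}$ bound in the Example. So drop the Skolemization attempt and state~(ii) in this corrected form.
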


\begin{proof} 
(i). 
By Corollary~\ref{c: dLS vs theta}(i), 
$\vartheta(\varphi)$ is equivalent to 
$\vartheta_{\le\nu}(\varphi)$. 
And by Lemma~\ref{l: relativization},
$\vartheta_{\le\nu}(\varphi)$ 
is equivalent to a~formula in 
$\mathcal L_{\kappa',\lambda'}(\tau)$ 
where $\kappa':=\max(\kappa,\mu^+,\nu^+)$, 
$\lambda':=\max(\lambda,\nu^+)$.  
Since $\kappa\le\nu,\mu\le\nu$, we get  
$\kappa'=\lambda'=\nu^+$, as required. 

(ii). 
Likewise, by Corollary~\ref{c: dLS vs theta}(ii), 
if one of (a)--(c) holds, then $\vartheta(\varphi)$ 
is equivalent to $\vartheta_{<\kappa}(\varphi)$, so  
there is $\nu<\kappa$ such that $\vartheta(\varphi)$ 
is equivalent to $\vartheta_{\nu}(\varphi)$. And 
by Corollary~\ref{c: expressibility of theta_nu}, 
for each $\nu<\kappa$, $\vartheta_{\nu}(\varphi)$ 
is equivalent to a~formula in the same language 
$\mathcal L_{\kappa,\lambda}(\tau)$, as required.
\end{proof}

\hide{
\begin{rmk} 
If $\kappa$ is limit (and so weakly inaccessible), 
$\vartheta(\varphi)$ is not expressed in 
$\mathcal L_{\kappa,\lambda}(\tau)$ {\it uniformly} 
in~$\varphi$ since $\nu<\kappa$ in the equivalent 
formula $\vartheta_{\nu}(\varphi)$ depend 
on~$\varphi$ (while $\vartheta_{<\kappa}(\varphi)$ is 
$\bigvee_{\nu<\kappa}\vartheta_{\nu}(\varphi)$, which 
is too large to be in $\mathcal L_{\kappa,\lambda}$). 
On the other hand, for $\kappa^+$, $\vartheta(\varphi)$ 
is expressed by $\vartheta_{\le\kappa}(\varphi)$ 
uniformly for all~$\varphi$.  
\DS{[Give examples of non-uniformity!]}
\end{rmk}
}

\begin{ex}\label{e: inexpressible theta}
We provide a~few sentences~$\varphi$ for which 
$\vartheta(\varphi)$ express certain natural 
properties but are inexpressible in the same 
language as $\varphi$~itslef. Let $\tau$ consist 
of a~binary predicate symbol; then, assuming for 
simplicity that the order theory holds, we have:  
\begin{enumerate}
\setlength\itemsep{-0.2em}
\item[(i)] 
if $\varphi$ is 
a~$\mathcal L_{\omega,\omega}(\tau)$-sentence 
that says that there is no minimal element, 
then $\neg\,\vartheta(\varphi)$ expresses 
the {\it well-foundedness}  
(this is essentially the example of  
\cite{Saveliev 2019}, Theorem~4);
\item[(ii)] 
if $\psi$ is 
a~$\mathcal L_{\omega,\omega}(\tau)$-sentence 
that expresses density of order, 
then $\neg\,\vartheta(\psi)$ expresses 
its {\it scatteredness}; 
\item[(iii)] 
if $\chi$ is 
a~$\mathcal L_{\omega_1,\omega}(\tau)$-sentence 
(inexpressible in $\mathcal L_{\omega,\omega}(\tau)$ 
of course) that says that the ordered set is 
infinite and that it is an antichain 
(i.e., the order is empty), then 
$\neg\,\vartheta(\varphi\vee\chi)$, 
where $\varphi$~is as in~(i), expresses  
the {\it well-quasi-orderedness}.
\end{enumerate}
None of the three properties is expressible in 
$\mathcal L_{\omega_1,\omega}(\tau)$; in fact, 
the properties in (i) and~(iii) are inexpressible 
even in $\mathcal L_{\infty,\omega}(\tau)$ 
(see \cite{Saveliev 2019}, Theorem~4 
for a~slightly more general observation). 
%% \DS{[What about~(ii)?]}
On the other hand, it follows 
from Theorem~\ref{t: expressing theta}(i) 
that each of them is expressible in 
$\mathcal L_{\omega_1,\omega_1}(\tau)$. 
\end{ex}

%\newpage

\section*{Monadic languages}

In this section, first we consider arbitrary 
languages $\mathcal L_{\kappa,\lambda}$ and 
signatures~$\tau$ but formulas of a~rather simple  
structure that generalize $\Sigma^{0}_2$-formulas 
of $\mathcal L_{\omega,\omega}(\tau)$, and show 
that sentences of this form have the so-called 
L{\"o}wenheim number~$<\kappa$ (i.e., whenever 
such a~sentence is satisfied in a~model then 
it is satisfied in an its submodel of 
cardinality~$<\kappa$). We conclude that 
$\vartheta$ and $\vartheta_{<\kappa\gen}$ 
(or $\vartheta_{<\kappa}$ for $\tau$~without 
function symbols) coincide on these sentences.  
Further, we show then that so-called monadic-like 
sentences have this form; moreover, any such 
sentence of $\mathcal L_{\kappa,\lambda}(\tau)$ 
is equivalent to a~Boolean combination of 
existential (or universal) one-variable 
sentences of $\mathcal L_{\kappa,\omega}(\tau)$. 
Applying these observations to purely monadic 
(i.e., consisting only of unary predicate 
symbols) signatures~$\tau$ without equality, 
we prove that in this case 
$\mathcal L_{\kappa,\lambda}(\tau)$ 
is semantically equivalent to 
$\mathcal L_{\kappa,\omega}(\tau)$ 
and is closed under~$\vartheta$.

We start from the following observation. 

\begin{lm}\label{l: E and A}
In any $\mathcal L_{\kappa,\lambda}(\tau)$,
universal sentences are preserved under 
submodels, and existential under extensions. 
The converse is also true whenever  
\begin{enumerate}
\setlength\itemsep{-0.2em}
\item[(i)] 
$\kappa=\omega$ or $\kappa$ is strongly compact, 
%% \DS{[and $\lambda$~regular?]}, 
or 
\item[(ii)] 
$\kappa=\omega_1$ and $\lambda=\omega$.
\end{enumerate}
\end{lm}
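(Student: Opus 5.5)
The forward direction is a direct induction, and the converse is a compactness argument with the diagram method.

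\emph{Forward direction.} I will argue for existential sentences under extensions; universal sentences under submodels then follows by negation. Take an existential sentence $\bigexists_{\beta<\delta}y_\beta\,\psi$ with $\psi$ open. If $\mathfrak A\subseteq\mathfrak B$ and a tuple $(a_\beta)_{\beta<\delta}$ witnesses $\psi$ in $\mathfrak A$, the same tuple witnesses $\psi$ in $\mathfrak B$. This holds because open formulas of $\mathcal L_{\kappa,\lambda}$ are built from atomic ones by $\neg$ and $\bigwedge_{<\kappa}$. Atomic formulas, with terms evaluated identically in a submodel, are absolute between $\mathfrak A$ and $\mathfrak B$, and absoluteness is preserved by negation and by infinitary conjunction; this is a simple induction on $\psi$. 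If ``existential'' is meant to include the closure under $\bigwedge,\bigvee$ of such prenex forms, the same induction passes through those connectives as well.

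\emph{Converse, case (i).} This is the analogue of the Łoś--Tarski theorem. Let $\varphi$ be preserved under extensions, and let $\Sigma$ be the set of existential consequences of $\varphi$ in $\mathcal L_{\kappa,\lambda}(\tau)$. Let $\mathfrak A\vDash\Sigma$, and consider the theory
$$\{\varphi\}\cup\mathrm{Diag}(\mathfrak A),$$
where $\mathrm{Diag}(\mathfrak A)$ is the set of open sentences with new constants $c_a$, $a\in A$, true in $\mathfrak A$.
\begin{itemize}
\item Suppose this theory is inconsistent. By compactness (ordinary compactness if $\kappa=\omega$, $\kappa$-compactness of $\mathcal L_{\kappa,\kappa}$ if $\kappa$ is strongly compact), some subset $\Delta\subseteq\mathrm{Diag}(\mathfrak A)$ of size $<\kappa$ is inconsistent with $\varphi$. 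Then $\Delta$ involves fewer than $\kappa$ constants. When $\kappa$ is strongly compact, $\lambda\le\kappa$ and we may work in $\mathcal L_{\kappa,\kappa}$, so we can quantify these constants away. This gives $\varphi\vDash\neg\bigexists_{\beta}y_\beta\bigwedge\Delta(y)$.
\item Here $\neg\bigexists_\beta y_\beta\bigwedge\Delta(y)$ is universal, so in this step we use preservation of universals, that is, the dual argument. To stay on the existential side, I run the dual: $\varphi$ is preserved under submodels, $\Sigma$ is its set of universal consequences, and $\mathfrak A\vDash\Sigma$. If $\{\varphi\}\cup\mathrm{Diag}(\mathfrak A)$ is inconsistent, the universal sentence $\neg\bigexists y\bigwedge\Delta(y)$ lies in $\Sigma$ and fails in $\mathfrak A$, a contradiction.
\item So the theory has a model $\mathfrak B\vDash\varphi$ with $\mathfrak A\subseteq\mathfrak B$, and preservation gives $\mathfrak A\vDash\varphi$.
\end{itemize}
Hence $\Sigma\vDash\varphi$. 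Applying compactness once more, $\varphi$ follows from fewer than $\kappa$ members of $\Sigma$, so $\varphi$ is equivalent to their conjunction. The existential case is obtained by negation. One caveat: when $\kappa$ is strongly compact and $\lambda<\kappa$, the resulting sentence lies a priori only in $\mathcal L_{\kappa,\kappa}$, so the statement should be read for $\mathcal L_{\kappa,\kappa}$.

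\emph{Case (ii), $\mathcal L_{\omega_1,\omega}$.} Compactness fails here, so I would cite the known preservation theorem for $\mathcal L_{\omega_1,\omega}$ (Malitz; see also Keisler's \emph{Model Theory for Infinitary Logic}). It is proved by consistency properties: one works in a countable fragment containing $\varphi$, and uses interpolation, or a back-and-forth on countable models, in place of compactness.

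\emph{Main obstacle.} The bookkeeping in case (i) is the hard step. We must quantify out the $<\kappa$ diagram constants while keeping the quantifier arity within the language, and we must confirm that the strongly compact compactness theorem applies to the theory $\{\varphi\}\cup\mathrm{Diag}(\mathfrak A)$, which may have size $|A|$; it does, since strong compactness handles theories of arbitrary size.
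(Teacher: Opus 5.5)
Your proof is correct once you drop the abandoned existential-side setup at the start of case (i), and it is essentially the paper's proof. The paper cites the classical diagram-plus-compactness argument (Chang--Keisler, Prop.~3.2.2) for $\kappa=\omega$, says to ``modify the argument'' for strongly compact $\kappa$, and cites Dickmann (Cor.~2.3.9), i.e.\ Malitz's preservation theorem, for $\mathcal L_{\omega_1,\omega}$. Your caveat is well founded: for strongly compact $\kappa$ and $\lambda<\kappa$ the argument yields only a universal sentence of $\mathcal L_{\kappa,\kappa}$, and the paper's one-line proof does not address this either.
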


\begin{proof}
(i).
If $\kappa=\omega$, this is a~basic fact of 
model theory (see, e.g.,~\cite{Chang Keisler}, 
Proposition~3.2.2), and if $\kappa$~is strongly 
compact, modify the argument for~$\omega$ 
(this was already pointed out in 
\cite{Saveliev 2019}, Corollary~3).  

(ii). 
See \cite{Dickmann 1975}, 
Corollary~2.3.9 (or Theorem~2.3.8). 
\end{proof}

\begin{rmk}\label{r: no prenex}
Let $\tau$ consist of the only equality symbol. 
In $\mathcal L_{\kappa^+,\kappa^+}(\tau)$,
there are sentences that are preserved under 
submodels but not equivalent to universal ones 
(see \cite{Dickmann 1975}, Counterexample~2.4.11). 
Furthermore, if $\kappa$~is limit, then 
$\mathcal L_{\kappa^+,\kappa}(\tau)$ 
contains a~sentence preserved under submodels 
but not equivalent to any sentence that is 
universal, and even a~conjunction of prenex 
$\mathcal L_{\infty,\kappa}(\tau)$-sentences
(see \cite{Dickmann 1975}, Counterexample~2.4.12). 
%% \DS{[Remove or rewrite??]}
\hide{
If $0<\alpha<\aleph_\alpha=\kappa$ for 
some~$\alpha$, then this sentence is even 
in $\mathcal L_{\kappa,\kappa}(\emptyset)$; 
but I hide this info since here I assume 
thorough that $\kappa$ is regular.
}
\end{rmk}

It is well known that any model of 
a~$\Sigma^{0}_2$-sentence~$\varphi$ of 
$\mathcal L_{\omega,\omega}(\tau)$ (with 
an arbitrary~$\tau$) contains a~finite submodel 
whose size depends on the number of existential 
quantifiers in~$\varphi$ 
(see, e.g., \cite{Borger et al}, Theorem~6.2.2). 
This statement, which can be considered as 
a~specific version of the downward 
L\"owenhein--Skolem theorem for such sentences, 
can be generalized to arbitrary infinitary 
languages $\mathcal L_{\kappa,\lambda}(\tau)$. 
According into account that the prenex form 
theorem generally fails in infinitary languages 
(see \cite{Dickmann 1975}, Counterexample~2.4.12 
and Appendix~B), we prove this generalization for 
a~slightly larger class consisting of disjunctions 
of conjunctions of (prenex) existential-universal 
sentences, i.e., of sentences of the form 
$
\bigexists_{\alpha<\gamma}x_\alpha
\bigforall_{\!\beta<\delta}y_\beta\;
\psi
$
where $\psi$~is open.

%\newpage

\begin{prp}\label{p: EA}
Let $\varphi$ be a~sentence 
in $\mathcal L_{\kappa,\lambda}(\tau)$
of the form 
$$
\bigvee_{\varepsilon<\zeta}\;
\bigwedge_{\eta<\xi_{\varepsilon}}\;
\varphi_{\varepsilon,\eta}
$$
where each sentence $\varphi_{\varepsilon,\eta}$ 
is existential-universal, say,    
$$
\bigexists_{\alpha<\gamma_{\varepsilon,\eta}}\!\!
x_{\varepsilon,\eta,\alpha}\!
\bigforall_{\beta<\delta_{\varepsilon,\eta}}\!\!
y_{\varepsilon,\eta,\beta}\;\,
\psi_{\varepsilon,\eta}
$$
with an open~$\psi_{\varepsilon,\eta}$\,, 
and for all $\varepsilon<\zeta$ let 
$
\nu_{\varepsilon}:=
\sum_{\eta<\xi_{\varepsilon}}
|\gamma_{\varepsilon,\eta}|. 
$
Then any model $\mathfrak A$ satisfying~$\varphi$ 
includes a~$({\le}\nu_{\varepsilon})$-generated 
submodel~$\mathfrak B$ also satisfying~$\varphi$, 
for some $\varepsilon<\zeta$. 
If, moreover, $\fnc(\tau)$ is empty, 
then there is such a~$\mathfrak B$ of 
cardinality~$\le\nu_{\varepsilon}$. 

More generally, if $\varphi$ in 
$\mathcal L_{\kappa,\lambda}(\tau)$ is 
a~combination of conjunctions and disjunctions 
of existential-universal sentences, then 
the conclusion remains true with some 
$\nu<\kappa$.
\end{prp}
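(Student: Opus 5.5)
The plan is the standard witness argument for $\Sigma^{0}_2$-sentences, carried out with infinitely many witnesses. Suppose $\mathfrak A\vDash\varphi$. Since $\varphi$ is the disjunction $\bigvee_{\varepsilon<\zeta}\bigwedge_{\eta<\xi_\varepsilon}\varphi_{\varepsilon,\eta}$, we fix some $\varepsilon<\zeta$ with $\mathfrak A\vDash\varphi_{\varepsilon,\eta}$ for all $\eta<\xi_\varepsilon$. For each $\eta<\xi_\varepsilon$ we then choose witnesses $a_{\varepsilon,\eta,\alpha}\in A$, $\alpha<\gamma_{\varepsilon,\eta}$, satisfying
$$
\mathfrak A\vDash\bigforall_{\beta<\delta_{\varepsilon,\eta}}y_{\varepsilon,\eta,\beta}\;\psi_{\varepsilon,\eta}[a_{\varepsilon,\eta,\alpha}]_{\alpha<\gamma_{\varepsilon,\eta}}.
$$
This step uses the axiom of choice. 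Let $X$ be the set of all these witnesses, so that $|X|\le\nu_\varepsilon$. Let $\mathfrak B$ be the submodel of $\mathfrak A$ generated by $X$. It is $({\le}\nu_\varepsilon)$-generated, and if $\fnc(\tau)=\emptyset$ then $B=X$, so $|B|\le\nu_\varepsilon$. One degenerate point needs care: if $X$ is empty and there are no constants, the generated substructure would be empty. In that case we either allow empty models, as the paper implicitly does when it counts cardinalities $\le n$, or we add one arbitrary element, which only changes the bound by a harmless $+1$.

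Next we verify $\mathfrak B\vDash\varphi$. Open formulas are absolute between a submodel and the model, so $\psi_{\varepsilon,\eta}$ keeps its truth value under any assignment into $B$. It follows that universal formulas transfer downward (Lemma~\ref{l: E and A}, easy direction), and therefore
$$
\mathfrak B\vDash\bigforall_{\beta<\delta_{\varepsilon,\eta}}y_{\varepsilon,\eta,\beta}\;\psi_{\varepsilon,\eta}[a_{\varepsilon,\eta,\alpha}]_{\alpha<\gamma_{\varepsilon,\eta}}.
$$
Because the witnesses lie in $B$, $\mathfrak B\vDash\varphi_{\varepsilon,\eta}$ for every $\eta<\xi_\varepsilon$. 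Hence $\mathfrak B$ satisfies the $\varepsilon$-th disjunct, and so $\mathfrak B\vDash\varphi$.

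For the general statement, we argue by induction on the construction of $\varphi$ from existential-universal sentences by conjunctions and disjunctions of arity $<\kappa$. The claim to prove is slightly stronger than needed. If $\mathfrak A\vDash\varphi$, there is a set $X\subseteq A$ with $|X|<\kappa$ such that \emph{every} submodel $\mathfrak B$ of $\mathfrak A$ with $X\subseteq B$ satisfies $\varphi$. This monotone form is what makes conjunctions work, since different conjuncts may need different generated submodels.

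The base case is exactly the argument above, with $X$ the witness set of size $|\gamma|<\lambda\le\kappa$; any submodel containing $X$ satisfies $\varphi$ by the same downward-absoluteness reasoning. For a disjunction, we take the set $X$ for some true disjunct. For a conjunction of fewer than $\kappa$ conjuncts, we take the union of the sets $X_i$ for the conjuncts; by regularity of $\kappa$ this union still has size $<\kappa$. Finally, taking $\mathfrak B$ generated by $X$ gives a $({<}\kappa)$-generated submodel, which has cardinality $|X|$ when $\fnc(\tau)=\emptyset$. Setting $\nu:=|X|<\kappa$ yields the conclusion.

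The only real obstacle is spotting that one must strengthen the inductive hypothesis to the monotone form ("every submodel containing $X$"), together with the use of the regularity of $\kappa$ for the conjunction step. Everything else is routine.
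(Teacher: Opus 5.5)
Your proposal is correct and follows the paper's proof. In the main case you choose witnesses for each conjunct of a true disjunct, pass to the generated submodel, and use downward preservation of the universal part with the witnesses as parameters, exactly as the paper does. In the general case, your induction with the monotone hypothesis (every submodel containing a fixed $X$ with $|X|<\kappa$ satisfies $\varphi$) is a rigorous rendering of the paper's reduction: select a true disjunct in each disjunction, flatten the remaining conjunctions into a single one of size $<\kappa$ using regularity of $\kappa$, and apply the first part. Your remark about the empty generated submodel when $\nu_\varepsilon=0$ and there are no constants is a fair edge case that the paper passes over.
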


\begin{proof}
Let $\mathfrak A$ be a~model of~$\tau$ such that 
$\mathfrak A\vDash\varphi$, so there is 
$\varepsilon<\zeta$ such that 
$
\mathfrak A\vDash
\bigwedge_{\eta<\xi_{\varepsilon}}
\varphi_{\varepsilon,\eta}.
$
Fix such an~$\varepsilon$. To simplify notation, 
we omit its symbol in the indices, thus letting 
$\xi:=\xi_{\varepsilon}$\,,
$\varphi_{\eta}:=\varphi_{\varepsilon,\eta}$\,, 
$x_{\eta,\alpha}:=x_{\varepsilon,\eta,\alpha}$\,, 
$y_{\eta,\beta}:=y_{\varepsilon,\eta,\beta}$\,, 
and $\nu:=\nu_\varepsilon$\,. 
In this notation, we have 
$
\mathfrak A\vDash
\bigwedge_{\eta<\xi}\varphi_{\eta}\,,
$
i.e., 
$$
\mathfrak A\,\vDash\,
\bigwedge_{\eta<\xi}\;
\bigexists_{\alpha<\gamma_{\eta}}\!\!
x_{\eta,\alpha}\!\!
\bigforall_{\beta<\delta_{\eta}}\!\!
y_{\eta,\beta}\;\,
\psi_{\eta}\,.
$$
For each $\eta<\xi$, pick elements 
$b_{\eta,\alpha}$, $\alpha<\gamma_{\eta}$, 
of~$A$ (the universe of the model~$\mathfrak A$) 
such that 
$$
\mathfrak A\,\vDash
\bigforall_{\beta<\delta_{\eta}}\!\!
y_{\eta,\beta}\;\,
\psi_{\eta}
\qquad
[b_{\eta,\alpha}/x_{\eta,\alpha}].
$$
Let $\mathfrak B$ be the submodel of $\mathfrak A$ 
generated by 
$
\{b_{\eta,\alpha}:
\eta<\xi,\alpha<\gamma_{\eta}\}.
$ 
Clearly, $\mathfrak B$ is $(\le\nu)$-generated;  
moreover, it has 
cardinality~$\le\nu$ if $\fnc(\tau)$ is empty. 
(Note that $\nu<\kappa$ since 
$\nu=\sum_{\eta<\xi}|\gamma_{\eta}|$ where  
$\xi<\kappa$ and $\gamma_{\eta}<\lambda\le\kappa$ 
for all $\eta<\xi$, and $\kappa$~is regular).

Treating $b_{\eta,\alpha}$ as new constant symbols, 
and thus
$
\bigforall_{\beta<\delta_{\eta}}
y_{\eta,\beta}\;
\psi_{\eta}\;
[b_{\eta,\alpha}/x_{\eta,\alpha}]
$
as a~universal sentence, we see that 
it is preserved in submodels (by 
the easy part of Lemma~\ref{l: E and A}), 
so $\mathfrak B$~satisfies it. Hence, 
$$
\mathfrak B\,\vDash\,
\bigexists_{\alpha<\gamma_{\eta}}\!\!
x_{\eta,\alpha}\!\!
\bigforall_{\beta<\delta_{\eta}}\!\!
y_{\eta,\beta}\;\,
\psi_{\eta}\,, 
$$
i.e., $\mathfrak B\vDash\varphi_{\eta}$\,,
and a~fortiori, $\mathfrak B\vDash\varphi$, 
as required. 

The proof of the general case is essentially 
the same. If $\mathfrak A\vDash\varphi$ where 
$\varphi$ is constructed of existential-universal 
sentences by conjunctions and disjunctions, 
we select one index in each of these disjunctions 
such that, by eliminating all disjunctions and  
leaving only conjunctions with the selected 
indices, we obtain a~sentence (obviously stronger 
than~$\varphi$ but) still satisfied in~$\mathfrak A$. 
We can then rewrite the remaining conjunctions 
as a~single one (which will be still of 
size~$<\kappa$), thus arriving at the case 
discussed above.
\end{proof}

\begin{cor}\label{c: EA}
If $\varphi$ in 
$\mathcal L_{\kappa,\lambda}(\tau)$ is 
a~combination of conjunctions and disjunctions 
of existential-universal sentences, then 
$\vartheta(\varphi)$ is equivalent to 
$\vartheta_{<\kappa\gen}(\varphi)$. 
If, moreover, $|\fnc(\tau)|=0$, 
then $\vartheta(\varphi)$ is equivalent 
to $\vartheta_{<\kappa}(\varphi)$.
\end{cor}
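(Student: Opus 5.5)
The plan is to derive Corollary~\ref{c: EA} directly from Proposition~\ref{p: EA} by comparing the two operators model by model. One implication is trivial: $\vartheta_{<\kappa\gen}(\varphi)$ asserts the existence of a particular kind of submodel satisfying~$\varphi$. So whenever $\mathfrak A\vDash\vartheta_{<\kappa\gen}(\varphi)$, also $\mathfrak A\vDash\vartheta(\varphi)$, and likewise for $\vartheta_{<\kappa}$.

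For the converse, let $\mathfrak A\vDash\vartheta(\varphi)$ and pick a submodel $\mathfrak C$ of $\mathfrak A$ with $\mathfrak C\vDash\varphi$.
\begin{enumerate}
\setlength\itemsep{-0.2em}
\item[(1)] Apply Proposition~\ref{p: EA} to $\mathfrak C$ in place of~$\mathfrak A$. Since $\varphi$ is a combination of conjunctions and disjunctions of existential-universal sentences, this gives a $(\le\nu)$-generated submodel $\mathfrak B$ of $\mathfrak C$ with $\mathfrak B\vDash\varphi$, for some $\nu<\kappa$.
\item[(2)] The submodel relation is transitive, so $\mathfrak B$ is a submodel of~$\mathfrak A$. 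It is generated, inside $\mathfrak C$ and equally inside $\mathfrak A$, by the same set of size $\le\nu<\kappa$, because the closure under the function symbols is the same computation in either model. Hence $\mathfrak A\vDash\vartheta_{<\kappa\gen}(\varphi)$.
\item[(3)] If $|\fnc(\tau)|=0$, the second clause of Proposition~\ref{p: EA} makes $|B|\le\nu<\kappa$, so $\mathfrak A\vDash\vartheta_{<\kappa}(\varphi)$.
\end{enumerate}

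The only point needing care is that the $\nu$ in Proposition~\ref{p: EA} really is $<\kappa$ in the general case. This holds because the proposition, after selecting one disjunct in each disjunction, reduces to a single conjunction of fewer than $\kappa$ existential-universal sentences, each with fewer than $\lambda\le\kappa$ existential variables. Regularity of $\kappa$ then bounds the total number of witnesses below~$\kappa$, exactly as noted in that proof. Everything else is routine, so I expect no real obstacle.
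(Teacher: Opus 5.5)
Your proposal is correct and takes essentially the same route as the paper, which simply declares the corollary immediate from Proposition~\ref{p: EA}. Your argument spells out that step: apply the proposition to a witnessing submodel $\mathfrak C$ rather than to $\mathfrak A$ itself, then use transitivity of the submodel relation and the fact that the generated submodel is the same whether computed inside $\mathfrak C$ or inside $\mathfrak A$.
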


\begin{proof}
Immediate from Proposition~\ref{p: EA}. 
\end{proof}

\hide{
\DS{[It seems, we do not have, 
for a~given~$\varphi$, some fixed~$\nu<\kappa$ 
such that $\vartheta(\varphi)$ is equivalent 
to $\vartheta_{\le\nu\gen}(\varphi)$. Check!]}
}

%\newpage

%\subsection*{Monadic-like formulas}

A~signature~$\tau$ (and a~language $\mathcal L(\tau)$ 
in this signature) is called {\it monadic} iff 
any of its symbols has the arity~$\le1$, and 
{\it purely monadic} iff it is monadic and does not 
contain function symbols. %% other than constants??

A~standard observation, stated for the first time 
by L{\"o}wenheim~\cite{Lowenheim 1915}, is that 
all formulas of a~purely monadic language 
$\mathcal L_{\omega,\omega}(\tau)$ with equality 
are equivalent to Boolean combinations of existential 
(or universal) formulas 
(see, e.g., \cite{Borger et al}, Theorem~6.2.1; 
in fact, for $\tau$ consisting of $m$~unary predicate 
symbols, any satisfiable formula of the class 
$\Sigma^{0}_n\cup\Pi^{0}_n$ has a~model 
of size~$\le 2^m\cdot n$). 
We provide a~variant of this observation 
for arbitrary infinitary languages 
$\mathcal L_{\kappa,\lambda}$ and 
purely monadic languages without equality.

Given an arbitrary first-order (i.e., using only 
individual variables) language~$\mathcal L$, 
a~signature~$\tau$, and a~formula $\varphi$ of 
$\mathcal L(\tau)$, let us say that $\varphi$ is 
{\it monadic-like} iff each atomic subformula 
of~$\varphi$ contains at most one variable. E.g., 
so are open formulas in one parameter, 
or formulas in a~one variable fragment of 
a~first-order language, or else in its monadic 
fragment without equality. Note that formulas 
of a~language with equality, even if $\tau$ is 
empty, are generally not monadic-like.

\begin{tm}\label{t: monadic-like}
For any $\omega\le\lambda\le\kappa$ and any 
signature~$\tau$, each monadic-like formula $\varphi$ 
of $\mathcal L_{\kappa,\lambda}(\tau)$ is equivalent 
to a~Boolean combination of existential (or universal) 
formulas of the finite-quantifier language 
$\mathcal L_{\kappa,\omega}(\tau)$ each 
in at most one variable. 
\end{tm}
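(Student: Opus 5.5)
Since every atomic subformula of a~monadic-like formula contains at most one variable, each atomic formula is of the form $\theta(x)$ (one free variable) or is a~sentence (no variables, i.e.\ built from closed terms). The plan is to prove, by induction on the construction of~$\varphi$, the stronger statement: every monadic-like formula $\varphi(z_0,\dots)$ of $\mathcal L_{\kappa,\lambda}(\tau)$, with free variables among $\{z_i\}_{i\in I}$ ($|I|<\lambda$), is equivalent to a~Boolean combination (using conjunctions and disjunctions of size~$<\kappa$) of formulas each of which is either a~one-variable open formula $\theta(z_i)$ in $\mathcal L_{\kappa,\omega}(\tau)$ (whose atomic parts involve only~$z_i$), or a~sentence of the form $\exists x\,\theta(x)$ with $\theta$ a~one-variable open formula of $\mathcal L_{\kappa,\omega}(\tau)$. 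For sentences the first kind disappears (or is a~closed open sentence, trivially $\exists x$ of itself when models are nonempty), giving the theorem; the universal version follows by $\exists x\,\theta\equiv\neg\forall x\,\neg\theta$.

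The atomic, negation and conjunction steps are immediate, since the class of such Boolean combinations is closed under $\neg$ and $\bigwedge_{<\kappa}$. The key step is the quantifier $\bigexists_{\beta<\delta}y_\beta\,\psi$ with $\delta<\lambda$. By the induction hypothesis $\psi$ is a~Boolean combination of ``blocks''; first I put it in disjunctive normal form over these blocks: $\psi\equiv\bigvee_{j\in J}\bigwedge_{k}\chi_{j,k}$, where each $\chi_{j,k}$ is a~block or its negation. Then each conjunction splits as $\rho_j\wedge\bigwedge_{\beta<\delta}\theta_{j,\beta}(y_\beta)$, with $\rho_j$ collecting blocks not mentioning any $y_\beta$ and $\theta_{j,\beta}(y_\beta)$ the conjunction of the one-variable open parts in~$y_\beta$ (a~conjunction of open one-variable formulas is again open one-variable). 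Since the $y_\beta$ are distinct and no atomic formula links two variables, $\bigexists_\beta y_\beta$ distributes: $\bigexists_{\beta<\delta}y_\beta(\rho_j\wedge\bigwedge_\beta\theta_{j,\beta})\equiv\rho_j\wedge\bigwedge_{\beta<\delta}\exists y\,\theta_{j,\beta}(y)$, and $\bigexists$ commutes with $\bigvee_j$. So only single existential quantifiers over one variable remain, landing in $\mathcal L_{\kappa,\omega}(\tau)$.

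The main obstacle is the size bound: distributing a~Boolean combination into disjunctive normal form may create $2^{<\kappa}$ or more disjuncts, exceeding~$\kappa$. To avoid this, I would not normalize globally but use that $\kappa$-sized combinations of blocks, with all one-variable open parts in a~fixed variable $y_\beta$, can be grouped: for each $\beta$, the parts involving $y_\beta$ form a~single open formula in $y_\beta$ once we fix the truth values of the finitely/fewer-than-$\kappa$-many sentence blocks. Concretely, treat the sentence blocks $\sigma_s$ ($s\in S$, $|S|<\kappa$) as propositional parameters; for the quantifier step with $\delta$ variables I would aim to show $\psi$ is equivalent to $\bigwedge$/$\bigvee$ structure whose $y_\beta$-parts separate. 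If separation fails in general because a~conjunction like $\bigwedge_\beta\bigvee(\theta(y_\beta)\vee\theta'(y_{\beta'}))$ couples variables, I would instead replace it by the (legitimate, size-controlled) expression obtained by noting that the realized one-variable types in a~model determine everything: $\bigexists_\beta y_\beta\,\psi$ holds iff some choice function of ``types'' (each given by the $<\kappa$ atomic-in-$y$ formulas occurring) is realized, and I will check that the needed set of such choices can be expressed with fewer than $\kappa$ disjuncts, possibly invoking regularity of~$\kappa$ and $\lambda\le\kappa$; this counting is the step I expect to require the most care.
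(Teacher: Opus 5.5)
You are following the paper's own mechanism: induction on $\varphi$, and at a quantifier block, distributing $\bigexists_{\beta<\delta}y_\beta$ over a Boolean combination of one-variable pieces. You carry it out more carefully, since you group the open parts by the variable $y_\beta$ before distributing, which the paper's preliminary observation omits. You have also located exactly the step that the paper's sketch (``push quantifiers inward'') passes over: the distributive normal form may have $\kappa$ or more disjuncts.

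That step cannot be repaired. Take blocks $\exists z\,\theta(z)$ with $\theta$ open, and require the Boolean combination to lie in $\mathcal L_{\kappa,\omega}(\tau)$; this is your reading, and the one used in the proof of Theorem~\ref{t: monadic signature}. Then the statement is false already for $\kappa=\omega_1$, $\lambda=\omega$.

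Let $\tau=\{S\}\cup\{P_n,Q_n:n<\omega\}$ (unary, no equality) and
\[
\varphi:=\exists x\,\exists y\,\Bigl(\neg S(x)\wedge S(y)\wedge\bigwedge_{n<\omega}\bigl(P_n(x)\vee Q_n(y)\bigr)\Bigr).
\]
For $a\subseteq\omega$, let $x_a$ be a point satisfying $\neg S$ and $P_n$ iff $n\notin a$. Let $y_a$ be a point satisfying $S$ and $Q_n$ iff $n\in a$. All other atoms are false at these points. In the model $\mathfrak M(\mathcal A,\mathcal B)$ formed by the points $x_a$ ($a\in\mathcal A$) and $y_b$ ($b\in\mathcal B$), $\varphi$ holds iff $a\subseteq b$ for some $a\in\mathcal A$ and $b\in\mathcal B$.

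A Boolean combination $\Psi$ in $\mathcal L_{\omega_1,\omega}(\tau)$ uses only countably many blocks $\exists z\,\theta_i(z)$; universal blocks are negations of these. Each block holds in $\mathfrak M(\mathcal A,\mathcal B)$ iff some point satisfies $\theta_i$. Now choose:
\begin{itemize}
\item $\mathcal A$, an uncountable $\subseteq$-antichain in $\mathscr P(\omega)$;
\item $\mathcal A_0\subseteq\mathcal A$ countable, containing for each $i$ some $a$ with $x_a\vDash\theta_i$ whenever $\mathcal A$ contains one;
\item $a^*\in\mathcal A\setminus\mathcal A_0$.
\end{itemize}
Then $\mathfrak M(\mathcal A,\{a^*\})$ and $\mathfrak M(\mathcal A_0,\{a^*\})$ satisfy the same blocks, hence agree on $\Psi$. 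Yet $\varphi$ holds in the first and fails in the second.

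So the $2^{\aleph_0}$ disjuncts $\bigwedge_{n\notin b}P_n(x)\wedge\bigwedge_{n\in b}Q_n(y)$ cannot be compressed by any bookkeeping of realized types. Your first idea, separating the $y_\beta$-parts after fixing the sentence blocks, also has nothing to work with here: the matrix contains no sentence blocks, yet it couples $x$ and $y$. A variant of this example using two disjoint Bernstein sets shows that allowing arbitrary one-variable sentences of $\mathcal L_{\omega_1,\omega}(\tau)$ as blocks does not help either.

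Your argument does prove the following.
\begin{itemize}
\item If $\kappa=\omega$ or $\kappa$ is strongly inaccessible, the proof is complete as written. The matrix has fewer than $\kappa$ subformulas, hence some $\mu<\kappa$ literals. Its DNF therefore has at most $2^\mu<\kappa$ distinct disjuncts, each a conjunction of fewer than $\kappa$ literals.
\item For any other regular $\kappa$, your argument (like the paper's sketch) yields only a weaker statement. The one-variable blocks lie in $\mathcal L_{\kappa,\omega}(\tau)$, but the outer Boolean combination must be taken in $\mathcal L_{\infty,\omega}(\tau)$.
\end{itemize}
This weakening is unavoidable for every regular uncountable $\kappa$ that is not strongly inaccessible. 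To see this, run the example above with $\mu<\kappa\le 2^\mu$ pairs $P_\alpha,Q_\alpha$ and a $\subseteq$-antichain of size $2^\mu$ in $\mathscr P(\mu)$.
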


\begin{proof}
First observe the following facts. If $\varphi(x)$
is a~formula having a~single variable~$x$, then the 
formula $\bigexists_{\beta<\nu}x_\beta\,\varphi(x)$ 
is equivalent either to $\exists x\,\varphi(x)$ (that 
holds if $x$~appears among the variables~$x_\beta$), 
or to $\varphi(x)$ itself (otherwise). It follows that,
whenever each formula $\varphi_\alpha$, $\alpha<\mu$, 
has at most one variable, then 
$
\bigexists_{\beta<\nu}x_\beta
\bigwedge_{\alpha<\mu}\varphi_\alpha
$
is equivalent to $\bigwedge_{\alpha<\mu}\psi_\alpha$ 
where each $\psi_\alpha$ is either
$\exists y_\alpha\,\varphi_\alpha$ 
(that holds if $y_\alpha$~is the single variable 
of $\varphi_\alpha$ and it appears among 
the variables~$x_\beta$), or  
$\varphi_\alpha$ (otherwise). Also (and trivially 
for all formulas) 
$
\bigexists_{\beta<\nu}x_\beta
\bigvee_{\!\alpha<\mu}\varphi_\alpha
$
is equivalent to $\bigvee_{\!\alpha<\mu}\psi_\alpha$, 
and similarly for infinitary universal quantifiers.

Now we can prove the theorem by induction on 
construction of~$\varphi$. Let us say that 
a~monadic-like formula is in the {\it normal form} 
iff it is a~Boolean combination of existential 
(or open) formulas each of them is in at most one 
variable. 
Essentially we need to show that whenever a~formula 
$\varphi$ is in the normal form then 
$\bigexists_{\beta<\nu}x_\beta\,\varphi$ can be 
represented in the normal form as well. 
Arguing by induction on the length of the Boolean 
combination and using the observation above, we push 
quantifiers inward the formula, replacing them and 
the connectives by the dual quantifiers and connectives 
when needed, and result in a~formula in the normal 
form, as required. 
\end{proof}

%\newpage

\begin{tm}\label{t: monadic signature}
Let $\omega\le\lambda\le\kappa$ and 
$\tau$~a~purely monadic signature without equality. 
Then, up to the semantic equivalence, 
$\mathcal L_{\kappa,\lambda}(\tau)$ coincides with 
$\mathcal L_{\kappa,\omega}(\tau)$ and is closed 
under~$\vartheta$. 
\end{tm}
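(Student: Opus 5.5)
Since $\tau$ is purely monadic and has no equality, every atomic formula of $\mathcal L_{\kappa,\lambda}(\tau)$ has the form $P(x)$ for a unary predicate symbol $P$ and a variable $x$. So every formula of $\mathcal L_{\kappa,\lambda}(\tau)$ is monadic-like. By Theorem~\ref{t: monadic-like}, each formula $\varphi$ is therefore equivalent to a Boolean combination of existential formulas of $\mathcal L_{\kappa,\omega}(\tau)$, each in at most one variable. This already gives the first claim: $\mathcal L_{\kappa,\lambda}(\tau)$ and $\mathcal L_{\kappa,\omega}(\tau)$ coincide up to semantic equivalence. One point needs checking here. The Boolean combination produced by the induction in Theorem~\ref{t: monadic-like} must have fewer than $\kappa$ arity at each connective. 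This holds because the pushing-in step only replaces connectives by their duals and never enlarges the index sets.

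For closure under $\vartheta$, let $\varphi$ be a sentence. I will put it in negation normal form with respect to the one-variable blocks. A one-variable existential sentence is $\exists x\,\chi(x)$ with $\chi$ open. Its negation is $\forall x\,\neg\chi(x)$, which is again one-variable and, trivially, existential-universal (with empty existential block). An existential one-variable sentence is existential-universal as well. Pushing negations down to these blocks via De~Morgan turns $\varphi$ into a combination of conjunctions and disjunctions of existential-universal sentences. Corollary~\ref{c: EA} then applies with $|\fnc(\tau)|=0$, so $\vartheta(\varphi)$ is equivalent to $\vartheta_{<\kappa}(\varphi)$.

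The remaining task is to express $\vartheta_{<\kappa}(\varphi)$ inside $\mathcal L_{\kappa,\omega}(\tau)$ itself; I expect this to be the main obstacle. The trouble is that $\vartheta_{<\kappa}$ is a $\kappa$-ary disjunction, and Lemma~\ref{l: relativization} requires $\nu$-ary quantifiers, so neither can be used directly. I will instead exploit the lack of equality. Any submodel of a model $\mathfrak A$ is just a nonempty subset $B\subseteq A$. Since $\varphi$ is a Boolean combination of sentences $\exists x\,\chi(x)$, the truth of $\varphi$ in $\mathfrak B$ depends only on which open one-variable types realized in $\mathfrak A$ (complete $\tau$-types of single elements) are realized in $B$. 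Moreover, only the finitely- or $<\kappa$-many open formulas $\chi_i$, $i<\rho$ with $\rho<\kappa$, that occur in the normal form of $\varphi$ matter. For a set $S\subseteq\rho$, write $\varphi[S]$ for the propositional evaluation of $\varphi$ in which the atom $\exists x\,\chi_i(x)$ is set true iff $i\in S$. Then
\[
\vartheta(\varphi)\;\lra\;\bigvee_{\Sigma}\;\Bigl(\bigwedge_{\sigma\in\Sigma}\exists x\,\sigma(x)\Bigr),
\]
where $\Sigma$ ranges over nonempty families of sets $\sigma\subseteq\rho$ for which $\varphi[\{i:\exists\sigma\in\Sigma,\ i\in\sigma\}]$ holds, and $\sigma(x)$ denotes $\bigwedge_{i\in\sigma}\chi_i(x)\wedge\bigwedge_{i\notin\sigma}\neg\chi_i(x)$. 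Here I use that by Corollary~\ref{c: EA} we may take $B$ of size $<\kappa$, and that one witness per type suffices. A counting problem remains: this disjunction has $2^{2^\rho}$ disjuncts, which may exceed $\kappa$. I would resolve it by observing that $\vartheta$ is monotone and that only the set $U\subseteq\rho$ of realized indices matters. The formula can then be rewritten as $\bigvee_{U}\bigwedge_{i\in U}\exists x\,\chi_i(x)$, taken over those $U$ that are minimal with $\varphi[U]$ true, since the chosen witnesses for $i\in U$ realize at least $U$. Here lies the real subtlety: adding witnesses may make other $\chi_j$ true and destroy $\varphi[U]$. So the disjunction must genuinely range over types, and I would bound the number of relevant type-combinations by $<\kappa$ using the regularity of $\kappa$ together with the structure of the normal form. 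This is the step I would check most carefully.
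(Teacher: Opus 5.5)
\textbf{Genuine gap in the final step, and in the size claim of the first part.} Your first part is the paper's argument, but the justification you add is wrong. You say pushing quantifiers inward ``never enlarges the index sets''. To move $\exists x\exists y$ inside $\bigwedge_{k<\omega}(P_k(x)\lra\neg P_k(y))$, you must first distribute the conjunction over the disjunctions, so that the literals in $x$ are separated from those in $y$. This yields the $2^{\aleph_0}$ disjuncts $\exists x\bigwedge_k P_k^{f(k)}(x)\wedge\exists y\bigwedge_k P_k^{1-f(k)}(y)$, $f\in 2^\omega$ (where $P^1:=P$, $P^0:=\neg P$). For $\kappa=\omega_1$ this blow-up cannot be avoided. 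A countable Boolean combination of one-variable existential sentences only records which of countably many Borel sets $X_i\subseteq 2^\omega$ the set of realized types meets. Refine the topology to a Polish one in which all $X_i$ are open; a set meets an open set iff its closure does. Then two countable dense subsets of the perfect kernel, one containing a complementary pair of types and one containing none, meet the same $X_i$, yet the sentence above separates them. Your final step is also not carried out. The type disjunction is correct but has up to $2^{2^\rho}$ disjuncts, the minimal-$U$ shortcut is unsound (as you note), and ``bound the relevant type-combinations using regularity'' is not an argument.

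\textbf{The paper's route, and why it does not close the gap.} The paper never enumerates types. It passes to $\vartheta_{\le\nu}(\varphi)$ with $\nu<\kappa$ and writes it as $\bigexists_{\alpha<\nu}x_\alpha\,\varphi^{\{x_\alpha\}_{\alpha<\nu}}$. Lemma~\ref{l: relativization of 1-parameter} turns the relativization into an equality-free open monadic-like formula $\psi$, and Theorem~\ref{t: monadic-like} is then applied a second time to $\bigexists_{\alpha<\nu}x_\alpha\,\psi$. That is the idea you are missing relative to the paper. However, the second appeal hides the same normal-form blow-up, and for $\kappa=\omega_1$ both the step and the statement fail.

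Let $\tau=\{P_s:s\in\omega^{<\omega}\}$. Let $\varphi\in\mathcal L_{\omega_1,\omega}(\tau)$ say that each element satisfies exactly one $P_s$, and that the set of labels is closed under initial segments, linearly ordered by inclusion, and contains nodes of every length. Let $\mathfrak M_S$ have universe a tree $S$, with $P_s$ interpreted as $\{s\}\cap S$. Then $\mathfrak M_S\vDash\vartheta(\varphi)$ iff $S$ has an infinite branch.

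Without equality, truth of infinitary sentences is preserved by surjections that preserve and reflect every $P_s$. Hence $\mathfrak M_S$ is equivalent to a structure on $\omega$ that depends continuously on $S$. Since $\mathcal L_{\omega_1,\omega}$-sentences define Borel sets of countable structures, an $\mathcal L_{\omega_1,\omega}(\tau)$-sentence equivalent to $\vartheta(\varphi)$ would make the set of ill-founded trees Borel, which it is not. This is a monadic coding of Example~\ref{e: inexpressible theta}(i). Moreover, $\vartheta(\varphi)$ is expressible in $\mathcal L_{\omega_1,\omega_1}(\tau)$ (relativize to $\omega$ variables), so the coincidence claim also fails for $\lambda=\kappa=\omega_1$.

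The argument does work when $\kappa=\omega$ or $\kappa$ is strongly inaccessible, because then $2^\rho<\kappa$ for all $\rho<\kappa$. In that case full disjunctive normal forms stay in $\mathcal L_{\kappa,\omega}$. Your type disjunction then has at most $2^{2^\rho}<\kappa$ disjuncts and is by itself a complete proof of closure under $\vartheta$.
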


\begin{proof}
By Theorem~\ref{t: monadic-like}, any monadic-like 
sentence~$\varphi$ is equivalent to a~sentence 
which is a~Boolean combination of existential 
sentences of $\mathcal L_{\kappa,\omega}(\tau)$, 
each in one variable. Furthermore, pushing the 
negation inward, we can assume that it is 
a~combination of conjunctions and disjunctions 
of one-variable existential and universal 
sentences of $\mathcal L_{\kappa,\omega}(\tau)$. 
By Corollary~\ref{c: EA} (and since 
$|\fnc(\tau)|=0$), for any model $\mathfrak A$ 
of~$\tau$, there is $\nu<\kappa$ such that 
in~$\mathfrak A$, $\vartheta(\varphi)$ and 
$\vartheta_{\le\nu}(\varphi)$ are equivalent. 
Furthermore, 
by Lemma~\ref{c: expressibility of theta_nu}, 
$\vartheta_{\le\nu}(\varphi)$ is equivalent to 
a~sentence~$\chi'$ 
in $\mathcal L_{\kappa,\kappa}(\tau')$ where 
$\tau':=\tau\cup\{=\}$. Let us show that, in 
turn, $\chi'$ is equivalent to a~sentence~$\chi$ 
of $\mathcal L_{\kappa,\omega}(\tau)$.

Recall that, by the proof of 
Lemma~\ref{c: expressibility of theta_nu} 
(and taking into account $|\fnc(\tau)|=0$ again), 
the sentence~$\chi'$ is the formula 
$
\bigexists_{\alpha<\nu}x_\alpha\,
\varphi^{\{x_\alpha\}_{\alpha<\nu}}.
$
Since $\varphi$ can be assumed to be a~Boolean 
combination of one-variable existential sentences 
of $\mathcal L_{\kappa,\omega}(\tau)$, say 
$\exists y_\eta\,\psi_\eta(y_\eta)$ 
with open~$\psi_\eta$, $\eta<\xi$, 
the formula $\varphi^{\{x_\alpha\}_{\alpha<\nu}}$ 
is equivalent to the same Boolean combination of 
the formulas 
$
(\exists y_\eta\,
\psi_\eta(y_\eta))^{\{x_\alpha\}_{\alpha<\nu}},
$ 
$\eta<\xi$. 
Observe the following fact:

\begin{lm}\label{l: relativization of 1-parameter}
If $\psi$ is an open formula (of arbitrary~$\tau$) 
in one parameter, then 
\begin{align*}
\bigl(\exists y\,
\psi(y)\bigr)^{\{x_\alpha\}_{\alpha<\nu}}
\text{ iff }
\bigvee_{\alpha<\nu}
\psi(x_\alpha),
\\
\bigl(\forall y\,
\psi(y)\bigr)^{\{x_\alpha\}_{\alpha<\nu}}
\text{ iff }
\bigwedge_{\alpha<\nu}
\psi(x_\alpha). 
\end{align*}
\end{lm}

\begin{proof}
A~routine verification.   
\end{proof}

By this observation, the formula 
$\varphi^{\{x_\alpha\}_{\alpha<\nu}}$ is equivalent 
to a~Boolean combination of the formulas 
$\bigvee_{\!\alpha<\nu}\psi_\eta(x_\alpha)$,
$\eta<\xi$, and so, to an open formula, 
say~$\psi$, in $\mathcal L_{\kappa,\omega}(\tau)$. 
Thus, we showed that $\vartheta_{\le\nu}(\varphi)$ 
is expressed by the sentence 
$\bigexists_{\alpha<\nu}x_\alpha\,\psi$. 
The latter sentence is in 
$\mathcal L_{\kappa,\kappa}(\tau)$, 
but since it is obviously monadic-like, 
by Theorem~\ref{t: monadic-like}, 
it is equivalent to a~sentence~$\chi$ in 
$\mathcal L_{\kappa,\omega}(\tau)$, as required. 
\end{proof}

\hide{
\begin{rmk}
Obviously, if $\tau$ contains the equality 
symbol, then $\mathcal L_{\kappa,\lambda}(\tau)$ 
with $\lambda>\omega$ is not equivalent to 
$\mathcal L_{\kappa,\omega}(\tau)$; e.g., 
a~sentence (in $\tau:=\{=\}$) expressing 
countability is in 
$\mathcal L_{\kappa,\lambda}(\tau)$ but 
not in $\mathcal L_{\kappa,\omega}(\tau)$. 
\DS{[remove this trivial remark from 
the finalized text?]}
\end{rmk}
}

%\newpage

\section*{Satisfiability in extensions}

In this final section, we consider the dual 
operator~$\vartheta_{K,R}$ in which $R$~is the 
relation of extension of models (and $K$~is 
the class of models of a~given~$\tau$). As was 
said, we denote it just by~$\vartheta^*$; 
thus, $\mathfrak A\vDash\vartheta^*(\varphi)$
means that $\varphi$~is satisfied in some 
extension $\mathfrak B$ of~$\mathfrak A$.
Two main results of this section are as follows. 
First, for some~$\tau$, there are sentences 
$\varphi$ of $\mathcal L_{\omega,\omega}(\tau)$ 
with $\vartheta^*(\varphi)$ inexpressible 
by a~single sentence of the same language. 
Second, for all~$\tau$ and $\varphi$ in 
$\mathcal L_{\kappa,\lambda}(\tau)$ where 
$\kappa$ is $\omega$ or strongly compact, 
$\vartheta^*(\varphi)$ is expressed by 
a~universal theory in the same language, 
and so, by a~single universal sentence in 
$\mathcal L_{\infty,\lambda}(\tau)$. 
The latter shows that the expressibility of 
the extension modality (i.e.,~$\vartheta^*$) 
is generally easier than that of the submodel 
modality (i.e.,~$\vartheta$). Our final remark 
is that this expressibility result remains true 
for the second-order languages 
$\mathcal L^{2}_{\kappa,\lambda}$ 
where $\kappa$ is extendible.

We start from the following observation. 
Clearly, for every sentence~$\varphi$ of 
any language, $\vartheta^*(\varphi)$ is 
preserved under extensions of models, 
whence we get:

\begin{prp}\label{p: theta* universal} 
Let $\tau$~a~signature and $\varphi$ a~sentence 
in $\mathcal L_{\kappa,\lambda}(\tau)$ where 
either $\kappa=\omega$, or 
$\kappa$ is strongly compact, 
%% \DS{[and $\lambda$~regular?]}, 
or else $\kappa=\omega_1$ and $\lambda=\omega$.
The following are equivalent:
\begin{itemize}
\setlength\itemsep{-0.2em}
\item[(i)]
$\vartheta^*(\varphi)$ is equivalent to 
a~sentence in $\mathcal L_{\kappa,\lambda}(\tau)$;
\item[(ii)]
$\vartheta^*(\varphi)$ is equivalent to 
a~universal sentence in 
$\mathcal L_{\kappa,\lambda}(\tau)$. 
\end{itemize}
\end{prp}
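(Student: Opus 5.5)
The implication (ii)$\Rightarrow$(i) is trivial, since a universal sentence of $\mathcal L_{\kappa,\lambda}(\tau)$ is in particular a sentence of that language. So the whole content lies in (i)$\Rightarrow$(ii), and the plan is to reduce it to the converse part of Lemma~\ref{l: E and A}.

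The first step is to record the semantic observation made just before the statement: $\vartheta^*(\varphi)$ is preserved under extensions. Suppose $\mathfrak A\vDash\vartheta^*(\varphi)$, witnessed by some $\mathfrak B\vDash\varphi$ with $\mathfrak A$ a submodel of $\mathfrak B$, and let $\mathfrak A'$ be any extension of $\mathfrak A$. The amalgamation needed here is not a genuine amalgamation problem, so I will build a common extension $\mathfrak C$ of $\mathfrak A'$ and $\mathfrak B$ directly, as follows.

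Replacing $\mathfrak B$ by an isomorphic copy over $A$, I take $B\cap A'=A$. This will require care, because the witness $\mathfrak B$ only needs to contain $\mathfrak A$ and says nothing about $\mathfrak A'$.

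At this point the naive argument fails. The sentence $\varphi$ need not be preserved when we pass from $\mathfrak B$ to a larger model, and an arbitrary common extension of $\mathfrak A'$ and $\mathfrak B$ need not exist without extra structure. So what really must be shown is that $\vartheta^*(\varphi)$ is preserved under extensions in the form: $\mathfrak A\subseteq\mathfrak A'$ and $\mathfrak A\vDash\vartheta^*(\varphi)$ imply $\mathfrak A'\vDash\vartheta^*(\varphi)$. The intended reading, which the paper's ``Clearly'' supports, is the other direction: if $\mathfrak A\vDash\vartheta^*(\varphi)$ and $\mathfrak A'$ is a \emph{submodel} of $\mathfrak A$, then $\mathfrak A'\vDash\vartheta^*(\varphi)$. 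This holds trivially, because any extension of $\mathfrak A$ is an extension of $\mathfrak A'$, by transitivity of the submodel relation. So the correct claim is that $\vartheta^*(\varphi)$ is preserved under submodels, and I will use exactly this.

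With that fixed, the proof of (i)$\Rightarrow$(ii) runs as follows. Let $\chi$ be a sentence of $\mathcal L_{\kappa,\lambda}(\tau)$ equivalent to $\vartheta^*(\varphi)$. Then $\chi$ is preserved under submodels. In each of the three cases of the hypothesis ($\kappa=\omega$; $\kappa$ strongly compact; $\kappa=\omega_1,\lambda=\omega$), the converse part of Lemma~\ref{l: E and A} applies: a sentence preserved under submodels is equivalent to a universal sentence of $\mathcal L_{\kappa,\lambda}(\tau)$. Hence $\chi$, and therefore $\vartheta^*(\varphi)$, is equivalent to a universal sentence, which proves (ii).

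The only delicate point is the direction of preservation just discussed, together with checking that Lemma~\ref{l: E and A} is stated in the form needed, namely ``preserved under submodels implies universal'' in exactly these three cases. Once those are settled, nothing else remains.
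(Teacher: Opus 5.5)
Your final argument is correct and is the paper's own proof: $\vartheta^*(\varphi)$ is preserved under submodels by transitivity of the submodel relation, so any $\mathcal L_{\kappa,\lambda}(\tau)$-sentence equivalent to it is equivalent to a universal one by the converse part of Lemma~\ref{l: E and A}. (Submodel preservation is what the paper's loosely worded ``preserved under extensions'' has to mean.) Delete the abandoned amalgamation paragraph, because upward preservation is not merely hard to prove but false: with $\varphi$ the group axioms, adjoining a zero element to a group gives an extension that embeds in no group.
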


\begin{proof}
Use Lemma~\ref{l: E and A}.
\end{proof}

%\newpage 

\begin{tm}\label{t: inexpressible theta*}
Let a~signature~$\tau$ contain two function symbols 
one of which is of arity~$\ge2$. Then there is 
a~$\Pi^{0}_2$ (i.e., universal-existential) sentence 
$\varphi$ in $\mathcal L_{\omega,\omega}(\tau)$ 
such that $\vartheta^*(\varphi)$ is not equivalent to 
any sentence of $\mathcal L_{\omega,\omega}(\tau)$. 
\end{tm}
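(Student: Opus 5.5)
The plan is to reduce the statement to non-finite axiomatizability of a universal theory, and then to defeat finite axiomatizability with compactness.

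\emph{Reduction.} For a finitary sentence $\varphi$, a model $\mathfrak A$ embeds into a model of $\varphi$ iff $\mathfrak A\vDash T^\forall_\varphi$, the set of universal consequences of~$\varphi$. This is the standard diagram argument: if $\mathfrak A\vDash T^\forall_\varphi$, then $\{\varphi\}\cup\mathrm{Diag}(\mathfrak A)$ is consistent, since otherwise $\varphi$ would imply the negation of a finite part of the diagram, and universally closing over the new constants gives a universal consequence of $\varphi$ that fails in~$\mathfrak A$. Hence $\vartheta^*(\varphi)$ is expressed by $T^\forall_\varphi$. If some sentence $\sigma$ of $\mathcal L_{\omega,\omega}(\tau)$ were equivalent to $\vartheta^*(\varphi)$, then $T^\forall_\varphi\vDash\sigma$. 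By compactness, a finite fragment of $T^\forall_\varphi$, i.e.\ a single universal consequence $\theta$ of~$\varphi$, would then already be equivalent to $\vartheta^*(\varphi)$; this is consistent with Proposition~\ref{p: theta* universal}. So it suffices to find a $\Pi^0_2$-sentence $\varphi$ and, for each universal consequence $\theta$ of~$\varphi$, a structure $\mathfrak A_\theta\vDash\theta$ that does not embed into any model of~$\varphi$.

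\emph{Choice of $\varphi$.} Let $f$ be the symbol of arity $\ge2$ (extra arguments are ignored, so treat it as binary) and $g$ the other symbol (treat it as unary). Take $\varphi$ to be the conjunction of:
\begin{itemize}
\item universal axioms saying that $f$ is a meet-semilattice operation, so $x\le y$ is $f(x,y)=x$;
\item a universal axiom saying that $g$ is strictly decreasing, i.e.\ $f(g(x),x)=g(x)\wedge g(x)\neq x$;
\item one $\forall\exists$ clause forcing unboundedly long chains that interact with~$f$, for instance $\forall x\,\exists y\,(g(y)=x)$, with $g$ injective.
\end{itemize}
The aim is that the obstructions to embedding form an infinite family of universal sentences $\alpha_n$, $n<\omega$. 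Each $\alpha_n$ forbids a finite configuration $C_n$ of about $n$ elements, built from $f$-terms and $g$-iterates, that cannot be completed to a model because the $\forall\exists$ clause would force a cycle in~$\le$. At the same time, no finite set of the $\alpha_n$ should suffice.

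\emph{Key steps, in order.}
\begin{enumerate}
\item Verify that $\varphi$ is $\Pi^0_2$ and consistent, e.g.\ via a model on $\mathbb Z$ or a similar ordered structure.
\item Show that each $\alpha_n$ is a universal consequence of $\varphi$, by exhibiting the forced cycle.
\item For each $N$, construct a structure $\mathfrak A_N$ that satisfies every universal consequence of $\varphi$ involving at most $N$ variables, but contains a copy of $C_{N+1}$ and therefore violates~$\alpha_{N+1}$. A natural candidate is a cyclic structure of length $>N$ in which every $N$-generated substructure embeds into a model of~$\varphi$.
\end{enumerate}
Any universal $\theta$ has some number $N$ of variables. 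If $\theta$ is a consequence of $\varphi$, then $\mathfrak A_N\vDash\theta$, because each $N$-element subset generates a substructure that embeds into a model of $\varphi$, and universal sentences in $N$ variables are determined by such substructures. But $\mathfrak A_N\nvDash\vartheta^*(\varphi)$, so $\theta$ does not express $\vartheta^*(\varphi)$, which completes the argument.

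\emph{Main obstacle.} The hard part is the last two steps together. I need the precise choice of the $\forall\exists$ clause so that, simultaneously, the long cyclic configurations are genuinely non-embeddable and all their small pieces are embeddable. With only unary symbols, the universal part of such a theory tends to be finitely axiomatizable, and this is presumably why a symbol of arity $\ge2$ is required. The binary $f$ must be used to tie distant elements together: in the cyclic structure, $f$ should be defined so that any $N$ elements generate only a "linear" fragment, embeddable into $\mathbb Z$-like models, while the full cycle is not. My first attempt would be a structure on $\mathbb Z/m\mathbb Z$ with $g(x)=x-1$ and $f(x,y)$ the element closer, in the cyclic sense, along a chosen arc. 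I would then check whether $N$-generated substructures really stay inside arcs of bounded length when $m\gg N$. If they do not, I would alter $f$ to collapse far-apart pairs to a fixed element.
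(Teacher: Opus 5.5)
Your reduction is correct, and it is what the paper relies on. By Theorem~\ref{t: expressing theta*}(i), $\vartheta^*(\varphi)$ is axiomatized by the universal consequences of $\varphi$, so by compactness it is expressed by a single sentence iff one universal consequence of $\varphi$ implies all the others. There is a genuine gap after that. The rest is a programme rather than a proof: you never produce a $\varphi$ together with the structures $\mathfrak A_\theta$, and the step you call the main obstacle is the entire content of the theorem.

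Moreover, the concrete choices you sketch fail. Take your $\varphi$: semilattice axioms, $g(x)<x$, injectivity of $g$, and $\forall x\,\exists y\,g(y)=x$. Every model $\mathfrak A$ of the universal part $U$ of this $\varphi$ extends to a model of $\varphi$. Above each $a\notin g[A]$, attach a new chain $a<a_1<a_2<\cdots$ with $g(a_1)=a$ and $g(a_{n+1})=a_n$. Let the elements of $A$ below $a_n$ be exactly those below $a$, and let no element of $A$ lie above $a_n$. The result is again a meet-semilattice: $a_n\wedge b=a\wedge b$ for $b\in A$, and meets of old elements are unchanged because no new element lies below an old one. Also, $g$ becomes a bijection with $g(x)<x$. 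So $\vartheta^*(\varphi)\equiv U$ is a single sentence.

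Your test structures $\mathfrak A_N$ cannot work either, and the trouble is with $g$, not $f$, so altering $f$ will not help. In $\mathbb Z/m\mathbb Z$ with $g(x)=x-1$, a single element already generates the whole cycle. That cycle violates the one-variable universal consequence $\forall x\,g^m(x)\ne x$ of $\varphi$, which follows from $g(x)<x$ in a partial order. With a total, strictly decreasing unary $g$ there are no ``long cycles whose small pieces embed''.

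The missing ingredient is an actual non-finite-axiomatizability theorem for a class of the form ``substructures of models of $\varphi$''. The paper takes $\varphi$ to be the group axioms: associativity and cancellation are universal, and invertibility is $\forall\exists$. Then $\vartheta^*(\varphi)$ expresses embeddability into a group. The paper invokes Maltsev's theorem: embeddability of a semigroup into a group is axiomatized by a countable set of quasi-identities but by no finite subset of it. Hence it is axiomatized by no single first-order sentence, since such a sentence would follow from a finite subset. Combined with your reduction, this gives the theorem at once. Building the $\mathfrak A_\theta$ by hand for this $\varphi$ would essentially amount to reproving Maltsev's result, which is far more delicate than your sketch allows for.
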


\begin{proof}
W.l.g.~we can assume that $\tau$ contains 
a~binary function symbol~$\cdot$ and a~constant 
symbol~$e$ (otherwise define such symbols 
in~$\tau$). Let $\varphi$ be the 
$\Pi^{0}_2$-sentence expressing that the model 
has a~group structure, i.e., $\varphi$ is the 
conjunction of associativity and cancellability 
laws (which are~$\Pi^{0}_1$) with invertibility 
(which is $\Pi^{0}_2$, namely, 
$\forall x\,\exists y\;x\cdot y=e$ and 
$\forall x\,\exists y\;y\cdot x=e$). Then 
$\vartheta^*(\varphi)$ expresses essentially 
that the model can be embedded into a~group 
(we can interpret other symbols in $\tau$ in 
a~trivial way). 

By Maltsev's classical result, the embeddability 
of a~semigroup into a~group is axiomatizable by 
a~countable set $T$ of quasi-identities but not 
by any its finite subset 
(\cite{Maltsev 1939,Maltsev 1940}, 
see also~\cite{Hollings}). 
It follows that it is not finitely axiomatizable 
at all (otherwise it would be axiomatizable by 
a~single~$\psi$, so $\psi$ would follow from~$T$, 
and so, from an its finite subset). Hence, 
the embeddability of a~groupoid into a~group is 
also not finitely axiomatizable, thus showing 
that $\vartheta^*(\varphi)$ is not equivalent 
to any single sentence in 
$\mathcal L_{\omega,\omega}(\tau)$, as required. 
\end{proof}

\begin{ex}\label{e: expressible theta*}
Small modifications of this $\varphi$ can lead to 
$\vartheta^*(\varphi)$ expressible by a~sentence 
in $\mathcal L_{\omega,\omega}(\tau)$: 
\begin{enumerate}
\setlength\itemsep{-0.2em}
\item[(i)] 
if $\varphi$ be the $\Pi^{0}_2$-sentence defining 
a~quasigroup structure, i.e., the conjunction of 
cancellability and invertibility, then 
$\vartheta^*(\varphi)$ is equivalent to 
the cancellability (see \cite{Cohn});
\item[(ii)] 
if $\varphi$ be the $\Pi^{0}_2$-sentence defining 
an Abelian group structure, i.e., the conjunction of 
associativity, commutativity, cancellability, and 
invertibility, then $\vartheta^*(\varphi)$ is equivalent 
to associativity, commutativity, and cancellability. 
\hide{reference??}
\end{enumerate}
\hide{
\DS{[NB: in both cases, $\vartheta^*(\varphi)$ is 
equivalent to a~conjunctive member of~$\varphi$]}. 
}
\end{ex}

Clearly, $\vartheta^*(\varphi)$ is expressible 
by a~theory (by a~sentence) iff the class of 
submodels of models of~$\varphi$ is axiomatizable 
(axiomatizable by a~single sentence). Let us show 
that the first case holds always whenever 
the language is compact. Moreover, if this is 
the case, then the second case holds iff there is 
a~strongest among all universal consequences 
of~$\varphi$. All this remains true on the class 
of models of a~given theory~$T$, and we present  
the proof for this slightly more general situation.

\begin{tm}\label{t: expressing theta*}
Let $\omega\le\lambda\le\kappa$, $\kappa=\omega$ 
or $\kappa$~a~strongly compact cardinal, let 
$\tau$ be an arbitrary signature, and let $T$ 
be a~theory and $\varphi$ a~sentence in 
$\mathcal L_{\kappa,\lambda}(\tau)$. Then: 
\begin{enumerate}
\setlength\itemsep{-0.2em}
\item[(i)] 
$\vartheta^*(\varphi)$ is always expressible 
by a~theory $\varTheta$ in 
$\mathcal L_{\kappa,\lambda}(\tau)$, 
namely, by 
$$
\varTheta:=
\bigl\{
\psi\in\mathcal L_{\kappa,\lambda}(\tau):
\psi\text{~is universal and }
T\cup\{\varphi\}\vDash\psi
\bigr\};
$$
\item[(ii)] 
$\vartheta^*(\varphi)$ is always expressible 
by a~single sentence in 
$\mathcal L_{\mu^+,\lambda}(\tau)$ 
where $\mu:=\max(\kappa,|\tau|)$; 
\item[(iii)] 
$\vartheta^*(\varphi)$ is expressible by a~single 
sentence in $\mathcal L_{\kappa,\lambda}(\tau)$ 
iff $\varTheta$ has the strongest formula. 
\end{enumerate}
\end{tm}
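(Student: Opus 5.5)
The plan is to treat (i) as a relativized version of the classical Łoś–Tarski argument via diagrams, where the compactness of $\mathcal L_{\kappa,\lambda}$ (trivial for $\kappa=\omega$, and $\kappa$-compactness for strongly compact $\kappa$) is the essential tool. Here $\vartheta^*(\varphi)$ is read relative to the class of models of $T$: $\mathfrak A\vDash\vartheta^*(\varphi)$ iff $\mathfrak A$ is a submodel of some $\mathfrak B\vDash T\cup\{\varphi\}$.

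For (i), the easy direction is immediate. If $\mathfrak A\subseteq\mathfrak B\vDash T\cup\{\varphi\}$, then $\mathfrak B\vDash\varTheta$. Every member of $\varTheta$ is universal, so by the easy part of Lemma~\ref{l: E and A} it passes down to $\mathfrak A$, giving $\mathfrak A\vDash\varTheta$.

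For the converse, let $\mathfrak A\vDash\varTheta$. Expand the signature by constants $c_a$ for $a\in A$, and let $D(\mathfrak A)$ be the open diagram of $\mathfrak A$, i.e.\ all atomic and negated atomic sentences true in $(\mathfrak A,a)_{a\in A}$. We want $T\cup\{\varphi\}\cup D(\mathfrak A)$ to be satisfiable, since any model of it gives the required extension. By compactness, it suffices to check every subset of size $<\kappa$ of $D(\mathfrak A)$ together with $T\cup\{\varphi\}$. The argument is that strong compactness of $\kappa$ gives satisfiability of a theory all of whose $<\kappa$-subsets are satisfiable. Since $T$ itself may be large, we include it entirely and apply compactness only to pieces of the diagram: any $<\kappa$-subset of $T\cup\{\varphi\}\cup D(\mathfrak A)$ is contained in $T_0\cup\{\varphi\}\cup D_0$ with $D_0$ small.

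Suppose $T\cup\{\varphi\}\cup D_0$ is unsatisfiable, where $D_0$ has size $<\kappa$ and mentions fewer than $\kappa$ constants. Since $\lambda\le\kappa$, and we may need $\lambda=\kappa$ here or else break the constants into blocks, replace the constants by fresh variables $\bar x$. Then $T\cup\{\varphi\}\vDash\bigforall\bar x\,\neg\bigwedge D_0(\bar x)$, which is a universal sentence of $\mathcal L_{\kappa,\kappa}$. This is the delicate step. When $\lambda<\kappa$, a block of fewer than $\kappa$ variables is not allowed in one quantifier. I would handle this with the standard strongly compact argument: use compactness in the form that a set of sentences in $<\kappa$ constants is inconsistent with $T\cup\{\varphi\}$ iff some subset in $<\lambda$ constants is. Alternatively, I would restate the claim that $\varTheta$ consists of sentences quantifying over $<\lambda$ variables and require $D_0$ to involve $<\lambda$ constants, which is justified by $\lambda$-compactness-type reasoning when $\lambda$ is itself compact. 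If that fails, the fallback is to prove the statement with $\varTheta$ taken in $\mathcal L_{\kappa,\kappa}$. Once the sentence lies in $\varTheta$, we get $\mathfrak A\vDash\bigforall\bar x\,\neg\bigwedge D_0$, which contradicts $D_0$ being true in $\mathfrak A$. I expect this bookkeeping on quantifier length, together with the legitimacy of compactness with the large $T$, to be the main obstacle.

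For (ii), note that $\mathcal L_{\kappa,\lambda}(\tau)$ has at most $\mu^{<\kappa}$ sentences up to equivalence, and perhaps $2^{\mu}$ in general. I would aim to show that $\varTheta$ is equivalent to a subset of size $\le\mu$, or else accept the cardinal bound given by counting formulas. Then $\bigwedge\varTheta$ lies in $\mathcal L_{\mu^+,\lambda}$ and is still universal. For (iii), if $\vartheta^*(\varphi)$ is expressed by $\sigma\in\mathcal L_{\kappa,\lambda}$, then by Proposition~\ref{p: theta* universal} we may take $\sigma$ universal. By (i), $T\cup\{\varphi\}\vDash\sigma$, so $\sigma\in\varTheta$, and $\sigma$ implies every member of $\varTheta$ since $\sigma$ is equivalent to $\varTheta$. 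Conversely, a strongest member of $\varTheta$ is equivalent to $\varTheta$ and hence expresses $\vartheta^*(\varphi)$ by (i).
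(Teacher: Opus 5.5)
Your argument for (i) is the paper's own. The easy half uses preservation of universal sentences under submodels. For the other half, you apply $\kappa$-compactness to $D_{\mathfrak A}\cup T\cup\{\varphi\}$ to get an inconsistent piece $\varGamma$ of the diagram of size $<\kappa$, and then take the universal closure of $\bigvee_{\chi\in\varGamma}\neg\,\chi'$, which lies in $\varTheta$ but fails in $\mathfrak A$. When $\lambda=\kappa$ (in particular $\kappa=\omega$) this is complete. The size of $T$ is no obstacle, since strong compactness applies to theories of any size.

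The step you flag is, however, a genuine gap when $\lambda<\kappa$. The set $\varGamma$ may mention up to $<\kappa$ constants, so the closure lies in $\mathcal L_{\kappa,\kappa}$, not in $\mathcal L_{\kappa,\lambda}$. The paper's proof makes this same step without comment. None of your proposed repairs can work, because (i) is false for $\lambda<\kappa$ when ``universal'' is read as both arguments use it, i.e.\ a block of $<\lambda$ universal quantifiers in front of an open formula. Here is a counterexample. Let $\kappa$ be strongly compact, $\lambda=\omega$, $\tau=\{E\}$, $T=\emptyset$, and
\[
\varphi:=\forall x\bigvee_{n<\omega}\neg\,\exists y_0\ldots\exists y_n\Bigl(\bigwedge_{i\le n}E(y_i,x)\wedge\bigwedge_{i<j\le n}y_i\neq y_j\Bigr),
\]
which says that every point has finitely many $E$-predecessors. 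Let $\mathfrak A$ consist of a point with infinitely many $E$-predecessors.
\begin{itemize}
\item No extension of $\mathfrak A$ satisfies $\varphi$, since $E$-predecessors persist in extensions.
\item Suppose $\forall x_0\ldots\forall x_{k-1}\,\chi$, with $\chi$ open, fails in $\mathfrak A$ at $\bar a$. Then it fails in the finite submodel on $\bar a$, which satisfies $\varphi$. Hence $\mathfrak A\vDash\varTheta$.
\end{itemize}
The same construction works for every $\lambda<\kappa$: take a point with $\lambda$ predecessors, against a $\varphi$ saying every point has fewer than $\lambda$. So assuming $\lambda$ compact does not help. In particular, the principle that an inconsistency of $\varGamma\cup T\cup\{\varphi\}$ is already witnessed by a part of $\varGamma$ with $<\lambda$ constants is false, because $\mathcal L_{\kappa,\lambda}$ is not $\lambda$-compact.

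What your argument (and the paper's) actually establishes is (i) for $\lambda=\kappa$. For $\lambda<\kappa$ it establishes your fallback, expressibility by the universal consequences in $\mathcal L_{\kappa,\kappa}$, which is a weaker statement. The same example shows that Proposition~\ref{p: theta* universal}, which you invoke in (iii), inherits the problem from Lemma~\ref{l: E and A} when $\lambda<\kappa$. There, $\varphi$ is preserved under submodels but is not equivalent to any universal sentence of $\mathcal L_{\kappa,\omega}$. For $\lambda=\kappa$ your (iii) is fine.

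For (ii), replace your hedge with an actual count. Up to equivalence there are at most $\mu^{<\kappa}$ universal sentences of $\mathcal L_{\kappa,\lambda}(\tau)$. This equals $\mu$ if $\kappa=\omega$. For strongly compact $\kappa$ it equals $\mu$ whenever $\mathrm{cf}\,\mu\ge\kappa$, by Solovay's theorem. When $\mathrm{cf}\,\mu<\kappa$, the count only yields $\mathcal L_{(\mu^{<\kappa})^+,\lambda}$. The paper's ``pick $\bigwedge\varTheta$'' relies on this same count without stating it.
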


\begin{proof}
Clearly, (i) implies (ii) (pick the sentence 
$\bigwedge\varTheta$) as well as (iii), so it 
remains to show~(i), i.e., that on models of~$T$, 
$\vartheta^*(\varphi)$ iff $\varTheta$. 

(Only if).
Suppose 
$\mathfrak A\vDash T\cup\{\vartheta^*(\varphi)\}$, 
so there is $\mathfrak B$ extending $\mathfrak A$ 
and such that $\mathfrak B\vDash T\cup\{\varphi\}$. 
Since universal formulas are preserved under 
submodels, we have $\mathfrak A\vDash\psi$ whenever 
$\psi$~is universal and $T\cup\{\varphi\}\vDash\psi$, 
hence $\mathfrak A\vDash\varTheta$.

(If).
Suppose $\mathfrak A\vDash T\cup\varTheta$, and let 
$D_{\mathfrak A}$ be the diagram of~$\mathfrak A$
(in the signature $\tau':=\tau\cup\{c_a:a\in A\})$. 
Let us verify that the theory 
$D_{\mathfrak A}\cup T\cup\{\varphi\}$ has a~model.

Toward a~contradiction, assume that 
$D_{\mathfrak A}\cup T\cup\{\varphi\}$ 
is not satisfiable. Since $\kappa=\omega$ 
or $\kappa$~strongly compact, and so, 
$\mathcal L_{\kappa,\lambda}$ satisfies 
the strong compactness theorem, there exists 
$\varGamma\in[D_{\mathfrak A}]^{<\kappa}$ 
such that $\varGamma\cup T\cup\{\varphi\}$ 
is not satisfiable. Therefore, 
$T\cup\{\varphi\}\vDash\psi$ where 
$\psi$~is the universal closure of the formula 
$\bigvee_{\!\chi\in\varGamma}\neg\,\chi'$ and 
$\chi'$~is obtained from~$\chi$ by replacing 
all constant symbols $c_a$, $a\in A$, by 
distinct variables (i.e., $\chi'$~has the form 
$\chi[x_a/c_a]$ where $a\mapsto x_a$ is 
a~one-to-one map of $A$ into the set of 
variables). Since $\psi$ is universal, 
$\psi\in\varTheta$. Therefore, 
$\mathfrak A\vDash\psi$. However, this is 
impossible since $\mathfrak A\vDash\varGamma$. 
This shows that the theory 
$D_{\mathfrak A}\cup T\cup\{\varphi\}$ 
is satisfiable.

So pick any $\tau'$-model $\mathfrak B$ of 
$D_{\mathfrak A}\cup T\cup\{\varphi\}.$
As its $\tau$-reduct includes a~submodel 
isomorphic to~$\mathfrak A$, this shows 
$\mathfrak A\vDash\vartheta^{*}(\varphi)$, 
as required. 
\end{proof}

\hide{
It seems, there is a~known general result, check:

A~model~$\mathfrak A$ has an extension satisfying 
a~theory~$T$ (in the same vocabulary) if and only if 
$\mathfrak A$ satisfies all the universal sentences 
that are provable from~$T$. (Actually, there is an 
even more general version of the result; see the 
``Model Extension Theorem'' in Section~5.2 of 
Shoenfield's ``Mathematical Logic'' or Lemma~3.5.10(i) 
of Hinman's ``Fundamentals of Mathematical Logic''.)
}

\begin{rmk}\label{r: extendible} 
Clearly, Theorem~\ref{t: expressing theta*}, 
due to its general form, characterizes not 
only $\vartheta^*$ but also $\vartheta^{*}_K$ 
where $K$~is the class of models of a~given 
theory~$T$. Moreover, an analogous theorem 
holds for second-order languages 
$\mathcal L^{2}_{\kappa,\lambda}$ 
whenever $\kappa$~is an extendible cardinal, 
by virtue of Magidor's characterization of 
such~$\kappa$ via the second-order compactness 
theorem given in \cite{Magidor 1971}; see also 
\cite{Kanamori,Kunen 2003}.
\end{rmk}

%\newpage

%}

\vskip+0.6em

\begin{footnotesize} 
\noindent
{\sc HSE University}
\\
{\it E-mail address:} 
niknikols0@gmail.com
\end{footnotesize}

\begin{footnotesize} 
\noindent
{\sc Higher School of Modern Mathematics MIPT}
\\
{\it E-mail address:} 
d.i.saveliev@gmail.com 
\end{footnotesize}

\end{document}